\numberwithin{equation}{section}
\newtheorem{letterthm}{Theorem}
\newtheorem{lettercor}[letterthm]{Corollary}
\newtheorem{letterconj}[letterthm]{Conjecture}
\newtheorem{thm}{Theorem}[section]
\newtheorem{lem}[thm]{Lemma}
\newtheorem{prop}[thm]{Proposition}
\theoremstyle{definition}
\newtheorem{rem}[thm]{Remark}
\newtheorem{df}[thm]{Definition}
\newtheorem*{question}{Question}
\newcommand{\R}{\mathbf{R}}
\newcommand{\C}{\mathbf{C}}
\newcommand{\Z}{\mathbf{Z}}
\newcommand{\F}{\mathbf{F}}
\newcommand{\N}{\mathbf{N}}
\newcommand{\B}{\mathbf{B}}
\newcommand{\K}{\mathbf{K}}
\newcommand{\Ad}{\operatorname{Ad}}
\newcommand{\id}{\text{\rm id}}
\newcommand{\Aut}{\operatorname{Aut}}
\newcommand{\rL}{\mathord{\text{\rm L}}}
\newcommand{\dom}{\mathord{\text{\rm dom}}}
\newcommand{\conv}{\overline{\mathord{\text{\rm conv}}} \,}
\newcommand{\Ball}{\mathord{\text{\rm Ball}}}
\newcommand{\spec}{\mathord{\text{\rm spec}}}
\newcommand{\ovt}{\mathbin{\overline{\otimes}}}
\newcommand{\bdot}{\, \cdot \,}
\newcommand{\op}{^{\mathrm{op}}}
\newcommand{\rd}{\: \mathrm{d}}
\newcommand{\ri}{\mathrm{i}}
\newcommand{\II}{{\rm II}}
\newcommand{\III}{{\rm III}}
\begin{document}

\title[Full factors, bicentralizer flow and approximately inner automorphisms]{Full factors, bicentralizer flow\\ and approximately inner automorphisms}

\begin{abstract}
We show that a factor $M$ is full if and only if the $C^*$-algebra generated by its left and right regular representations contains the compact operators. We prove that the bicentralizer flow of a type $\III_1$ factor is always ergodic. As a consequence, for any type $\III_1$ factor $M$ and any $\lambda \in ]0,1]$, there exists an irreducible AFD type $\III_\lambda$ subfactor with expectation in $M$. Moreover, any type $\III_1$ factor $M$ which satisfies $M \cong M \ovt R_\lambda$ for some $\lambda \in ]0,1[$ has trivial bicentralizer. Finally, we give a counter-example to the characterization of approximately inner automorphisms conjectured by Connes and we prove a weaker version of this conjecture. In particular, we obtain a new proof of Kawahigashi-Sutherland-Takesaki's result that every automorphism of the AFD type $\III_1$ factor is approximately inner.
\end{abstract}

\address{Research Institute for Mathematical Sciences, Kyoto University, Kyoto, 606-8502 Japan}

\author{Amine Marrakchi}
\email{amine.marrakchi@math.u-psud.fr}

\thanks{The author is supported by JSPS}

\subjclass[2010]{46L10, 46L36, 46L40, 46L55}

\keywords{Type ${\rm III}$ factor, Full factor; Bicentralizer; Approximately inner automorphism;}

\maketitle


\section{Introduction and statement of the main results}

\subsection*{Introduction} 
In this paper, we present several results on type $\III$ factors which are similar in the sense that they are all proved by using the same method. Indeed, the main technical novelty of this paper is a very general observation: given a self-adjoint operator $X$ on a Hilbert space $H$, we show that a state $\Psi \in \B(H)^*$ lies in the closed convex hull of all approximate eigenstates of $X$ if and only if it satisfies a strong invariance property with respect to the one-parameter group $(e^{\ri t X})_{t \in \R}$. This criterion provides a new method to deal with some issues which are specific to type $\III_1$ factors, when the modular operator has no true eigenvectors, and we hope that it will have further applications in the future.

\subsection*{Full factors} Following \cite{Co74}, we say that a factor $M$ is \emph{full} if it satisfies the following \emph{centralizing} net criterion: for every bounded net $(x_i)_{i \in I}$ in $M$ such that $\lim_i \| \varphi(x_i \bdot)-\varphi(\bdot x_i) \|=0$ for all $\varphi \in M_*$, there exists a bounded net $z_i \in \C$ such that $x_i - z_i \to 0$ in the strong topology. When $M$ is of type $\II_1$, this is equivalent to $M$ not having \emph{property Gamma} of Murray and von Neumann \cite{MvN43}. For example, Murray and von Neumann showed that the unique hyperfinite type $\II_1$ factor $R$ is not full while the free group factors $L(\F_n), \: n\geq 2$ are full. They hence obtained the first example of two (separable) non-isomorphic type $\II_1$ factors. 

In his famous paper on the classification of injective factors \cite{Co75b}, Connes established a powerful characterization of fullness. He showed that for any type $\II_1$ factor, the following three properties are equivalent:
\begin{itemize}
\item [$(\rm i)$] $M$ is full.
\item [$(\rm ii)$] The adjoint representation $\Ad : \mathcal{U}(M) \curvearrowright \rL^2(M)$ has spectral gap.
\item [$(\rm iii)$] The $C^*$-algebra $C^*_{\lambda \cdot \rho}(M)=\overline{\lambda(M)\rho(M)}^{\| \cdot \|}$ generated by the left and right regular representations $\lambda, \rho : M \rightarrow \B(\rL^2(M))$ contains the compact operators.
\end{itemize}
When $M$ is \emph{injective}, then $C^*_{\lambda \cdot \rho}(M)$ is isomorphic to $\lambda(M) \otimes_{\min} \rho(M)$ which is a simple $C^*$-algebra so that it cannot contain the compact operators. Hence, property $(\rm iii)$ was used by Connes to show that an injective $\II_1$ factor cannot be full and this was a key step in his proof of injectivity implies hyperfiniteness. He also used it to show that a tensor product of two $\II_1$ factors $M \ovt N$ is full if and only if $M$ and $N$ are both full. Finally, Connes' criterion is also an important tool in many deformation/rigidity arguments based on fullness, see for instance \cite[Theorem 5.1]{Po06} and \cite{Po10}. 

Unfortunately, the work of Connes does not generalize in a straightforward way to the type $\III$ situation. Indeed, it is easy to see that for \emph{any} infinite factor $M$ (whether it is full or not), the representation $\Ad : \mathcal{U}(M) \curvearrowright \rL^2(M)$ does not have invariant vectors and in fact, it does not have almost invariant vectors at all. Recently however, a form of spectral gap for full type $\III$ factors was obtained in \cite{Ma16}. While this characterization was sufficient for most applications \cite{HMV16}, \cite{Ma18}, it was not strong enough to answer the following question: is it true that for any full factor $M$, the $C^*$-algebra $C^*_{\lambda \cdot \rho}(M)$ contains the compact operators? In fact, only the type $\III_1$ case was left open and our first main theorem solves that question.

\begin{letterthm} \label{main full compact}
Let $M$ be an arbitrary factor. Then $M$ is full if and only if $C^*_{\lambda \cdot \rho}(M)$ contains the compact operators.
\end{letterthm}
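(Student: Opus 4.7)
The plan is to handle the two directions separately; the novel input of the paper (the approximate-eigenstate criterion announced in the introduction) enters only in the forward direction. For $(\Leftarrow)$, assume $\cK(\rL^2(M)) \subseteq C^*_{\lambda\cdot\rho}(M)$. Given a bounded centralizing net $(x_i)_{i \in I}$ in $M$ and any rank-one projection $P \in C^*_{\lambda\cdot\rho}(M)$, I would norm-approximate $P$ by finite sums of the form $\lambda(a)\rho(b)$; the centralizing condition forces $[\lambda(x_i), P] \to 0$ in norm, and rank-oneness then yields $\lambda(x_i) P - c_i P \to 0$ for suitable scalars $c_i$. Choosing $P$ with cyclic range vector, one extracts $x_i - c_i \to 0$ strongly, which is fullness.

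For $(\Rightarrow)$, since $M$ is a factor, $(\lambda(M) \cup \rho(M))\dpr = \B(\rL^2(M))$, so $C^*_{\lambda\cdot\rho}(M)$ acts irreducibly on $\rL^2(M)$; by a standard theorem it then suffices to produce one non-zero compact operator inside $C^*_{\lambda\cdot\rho}(M)$. Fix a faithful normal state $\varphi \in M_*$ with GNS vector $\xi_\varphi \in \rL^2(M)_+$, modular operator $\Delta = \Delta_\varphi$, and rank-one projection $P_\varphi$ onto $\C\xi_\varphi$. Assume for contradiction $P_\varphi \notin C^*_{\lambda\cdot\rho}(M)$; then irreducibility forces $C^*_{\lambda\cdot\rho}(M) \cap \cK(\rL^2(M)) = 0$, and pulling back a Hahn--Banach extension of $\omega_{\xi_\varphi}|_{C^*_{\lambda\cdot\rho}(M)}$ from the Calkin algebra yields a state $\Psi$ on $\B(\rL^2(M))$ with $\Psi|_{C^*_{\lambda\cdot\rho}(M)} = \omega_{\xi_\varphi}|_{C^*_{\lambda\cdot\rho}(M)}$ and $\Psi$ vanishing on $\cK(\rL^2(M))$; in particular $\Psi(P_\varphi) = 0$. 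Since conjugation by $\Delta^{\ri t}$ preserves $C^*_{\lambda\cdot\rho}(M)$ and $\cK(\rL^2(M))$ and fixes $\omega_{\xi_\varphi}$ (because $\Delta\xi_\varphi = \xi_\varphi$), averaging $\Psi$ against a Banach limit over $t \in \R$ produces a $\Delta^{\ri t}$-invariant state with the same properties.

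Setting $X = \log\Delta$, I would then invoke the paper's new criterion --- possibly after additional massaging to reach the strong form of $e^{\ri t X}$-invariance it requires --- to represent the averaged $\Psi$ as a weak-$*$ limit of convex combinations of vector states $\omega_{\eta_n}$ at approximate eigenvectors $\eta_n$ of $X$. Combining this representation with the identity $\Psi|_{C^*_{\lambda\cdot\rho}(M)} = \omega_{\xi_\varphi}|_{C^*_{\lambda\cdot\rho}(M)}$ shows that the $\omega_{\eta_n}$ must, on average, agree with $\omega_{\xi_\varphi}$ on $\lambda(M)\rho(M)$; a further averaging by inner automorphisms of $M$, together with the spectral-gap form of fullness from \cite{Ma16}, should then force each $\eta_n$ to concentrate on a scalar multiple of $\xi_\varphi$, yielding $\Psi(P_\varphi) = 1$ in the limit --- the desired contradiction. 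I expect the main obstacle to lie precisely in this final concentration step in the type $\III_1$ case: there the spectrum of $\log\Delta_\varphi$ is all of $\R$ with $\xi_\varphi$ essentially the only honest eigenvector, and converting approximate-eigenvector data into honest concentration near $\xi_\varphi$ itself is exactly what the new criterion is designed to accomplish, so its careful application to the state $\Psi$ constructed above is the decisive technical point.
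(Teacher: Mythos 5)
Your backward direction is fine (and in fact the paper does not reprove it, citing it as known from \cite{Co75b}). Your forward direction has the right overall architecture --- Hahn--Banach to get a state $\Psi$ on $\B(\rL^2(M))$ extending $\omega_{\xi_\varphi}$ and vanishing on $\K(\rL^2(M))$, then make it invariant, then apply the approximate-eigenstate criterion, then contradict spectral gap --- but at both places where you flag uncertainty, the gap is genuine.

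First, the passage from $\Delta^{\ri t}$-invariance to the \emph{strong} invariance the criterion requires is not minor ``massaging.'' A Banach-limit average gives you $\Psi \circ \sigma_t = \Psi$, which is strictly weaker: for singular states one does not have $\Psi(\sigma_\mu(T)) = \int \Psi(\sigma_t(T))\,\rd\mu(t)$, so pointwise invariance does not give invariance under the smeared automorphisms $\sigma_\mu$. The paper repairs this via a convolution device (Proposition \ref{prop strong fixed point}: replace $\Psi$ by $\Psi\circ\sigma_f$ for $f\in\rL^1(\R)^+$), but then one must check that $\Psi\circ\sigma_f$ still extends $\omega_{\xi_\varphi}$; that is Lemma \ref{lem binormal invariant}, and the proof hinges on $\omega_{\xi_\varphi}$ being normal separately on $\lambda(M)$ and $\rho(M)$. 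You cannot simply assert ``with the same properties'' after averaging.

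Second, your final concentration step is not how the paper closes the argument, and as you describe it (``on average agree\ldots further averaging by inner automorphisms'') it would not close. The mechanism is positivity, not averaging: the paper quotes \cite[Theorem~3.2]{HMV16}, which supplies a fixed finite set $F\subset M$ with $a\varphi^{1/2}=\varphi^{1/2}a^*$ and a spectral-gap inequality involving $\sum_{a\in F}\|xa-ax\|_\varphi$ and $\inf_\lambda\|x\varphi^{1/2}-\lambda\varphi^{1/2}x\|$. The operators $|\lambda(a)-\rho(a^*)|^2$ for $a\in F$ and the projection $e_\varphi$ are \emph{positive}, and $\Psi$ vanishes on all of them; when $\Psi$ is written as a barycenter over $\mathcal{E}(\log\Delta_\varphi)$, the vanishing therefore propagates to \emph{every} $\psi$ in the support of the measure, not merely on average. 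One then uses Proposition \ref{prop log} to realize a single such $\psi$ as a weak$^*$-limit of vector states $\langle\,\cdot\,x_i\varphi^{1/2},x_i\varphi^{1/2}\rangle$ with $\|x_i\|_\varphi=1$, $\varphi(x_i)\to 0$, $\|ax_i-x_ia\|_\varphi\to 0$ for $a\in F$, and $\inf_\lambda\|x_i\varphi^{1/2}-\lambda\varphi^{1/2}x_i\|\to 0$, which contradicts the spectral-gap inequality directly. No averaging over $\mathcal{U}(M)$ occurs, and the specific form of the spectral-gap criterion (a fixed finite $F$ together with the modular term) is what makes this work in the type $\III_1$ case you correctly identify as the crux. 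Finally, note the paper restricts the new argument to type $\III$, handling type $\mathrm{I}$ trivially and type $\II$ via \cite{Co75b}.
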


A consequence of Theorem \ref{main full compact} is that one can characterize fullness in terms of \emph{central} nets.

\begin{lettercor} \label{cor central net}
A factor $M$ is full if and only if for every bounded net $(x_i)_{i \in I}$ in $M$ such that $x_ia-ax_i \to 0$ strongly for all $a \in M$, there exists a bounded net $(z_i)_{i \in I}$ in $\C$ such that $x_i-z_i \to 0$ strongly.
\end{lettercor}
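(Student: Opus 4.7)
The plan is to derive the corollary from Theorem~\ref{main full compact} by an elementary rank-one operator argument in the standard representation on $\rL^2(M)$. The forward direction is the substantive content and is where Theorem~\ref{main full compact} enters; the reverse direction reduces to a standard characterization of fullness.

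For the forward direction, assume $M$ is full, so by Theorem~\ref{main full compact} the compact operators $\K(\rL^2(M))$ are contained in $C^*_{\lambda \cdot \rho}(M)$. Let $(x_i)$ be a bounded net in $M$ with $[x_i, a] \to 0$ strongly for every $a \in M$. Since $\lambda(x_i) \in \lambda(M)$ commutes with every element of $\rho(M) = \lambda(M)'$, the hypothesis gives
\[
[\lambda(x_i), \lambda(a)\rho(b)] = [\lambda(x_i), \lambda(a)]\rho(b) \longrightarrow 0
\]
strongly for all $a, b \in M$. By linearity and the uniform bound $\sup_i \|\lambda(x_i)\| < \infty$, this extends to $[\lambda(x_i), T] \to 0$ strongly for every $T \in C^*_{\lambda \cdot \rho}(M)$, and in particular for every compact operator $T$.

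Now fix a unit vector $\xi_0 \in \rL^2(M)$ and set $z_i := \langle \lambda(x_i)\xi_0, \xi_0\rangle$. For arbitrary $\eta \in \rL^2(M)$ the rank-one operator $P_\eta \colon \zeta \mapsto \langle \zeta, \xi_0\rangle\eta$ is compact, so $[\lambda(x_i), P_\eta] \to 0$ strongly. Evaluating at $\xi_0$ yields
\[
[\lambda(x_i), P_\eta]\xi_0 = \lambda(x_i)\eta - z_i\eta \longrightarrow 0
\]
in $\rL^2$-norm. Since $\eta$ was arbitrary and $(\lambda(x_i))$ is uniformly bounded, $\lambda(x_i - z_i) \to 0$ in the strong operator topology on $\B(\rL^2(M))$, which is exactly $x_i - z_i \to 0$ strongly in $M$.

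For the reverse direction, one checks that a bounded centralizing net in the sense of the definition of fullness also satisfies the central condition of the corollary (both conditions characterize, on bounded sequences, the triviality of the relative commutant $M' \cap M^\omega$ in the Ocneanu ultrapower, by a standard argument relating the norm on $M_*$ to the associated $\sharp$-seminorms), so the hypothesis of the corollary implies fullness. The main technical obstacle is concentrated in the forward direction, where the inclusion $\K(\rL^2(M)) \subseteq C^*_{\lambda \cdot \rho}(M)$ provided by Theorem~\ref{main full compact} plays a decisive role.
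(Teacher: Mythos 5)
Your forward direction is correct and is essentially the paper's argument: use Theorem~\ref{main full compact} to get $\K(\rL^2(M)) \subset C^*_{\lambda \cdot \rho}(M)$, observe that the central net commutes asymptotically with everything in $C^*_{\lambda \cdot \rho}(M)$ (hence with compacts), and apply this to a rank-one operator to extract scalar approximants. The only cosmetic difference is that the paper fixes $\xi_0 = \varphi^{1/2}$ for a faithful normal state $\varphi$, works with the single rank-one projection $p = e_\varphi$, and sets $z_i = \varphi(x_i)$, concluding $\|x_i - \varphi(x_i)\|_\varphi \to 0$; you let $\xi_0$ be arbitrary and vary the rank-one operator $P_\eta$ over all $\eta$, which is a slight generalization but buys nothing here.

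For the reverse direction (which the paper leaves implicit), your conclusion is right, but the parenthetical justification is circular. You assert that the fullness condition and the central-net condition ``both characterize the triviality of the relative commutant $M' \cap M^\omega$ in the Ocneanu ultrapower.'' On bounded nets, the centralizing condition corresponds to the asymptotic centralizer $M_\omega$, while the central condition corresponds to $M' \cap M^\omega$; the inclusion $M_\omega \subseteq M' \cap M^\omega$ always holds, but the reverse implication ``$M_\omega = \C \Rightarrow M' \cap M^\omega = \C$'' is precisely the content of the forward direction of the corollary, not a standard prerequisite. What the reverse direction actually needs is only the easy inclusion: every bounded centralizing net is central (for a faithful normal state $\varphi$, $\|x_i\varphi - \varphi x_i\| \to 0$ for all $\varphi \in M_*$ forces $[x_i, a] \to 0$ $*$-strongly for all $a$), so if all central nets are trivial then all centralizing nets are trivial and $M$ is full.
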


There is another consequence of Theorem \ref{main full compact} regarding the topology of the automorphism group $\Aut(M)$. This group is usually equipped with the topology of pointwise norm convergence on the predual $M_*$, which means that a net $(\theta_i)_{i \in I}$ in $\Aut(M)$ converges to the identity if and only if $\lim_i \|\varphi - \varphi \circ \theta_i\|=0$ for all $\varphi \in M_*$. Another possible topology on $\Aut(M)$ is the topology of pointwise strong convergence on $M$, i.e.\ a net $(\theta_i)_{i \in I}$ converges to the identity if and only if $\theta_i(x)$ converges to $x$ strongly for all $x \in M$. These two topologies, called $u$-topology and $p$-topology respectively in \cite{Ha73}, are the same when $M$ is finite but they do not coincide in general when $M$ is of type $\III$ \cite[Corollary 3.15]{Ha73}. However, we have the following corollary of Theorem \ref{main full compact}.

\begin{lettercor} \label{cor topology}
Let $M$ be a full factor. Then the $u$-topology and the $p$-topology coincide on $\Aut(M)$.
\end{lettercor}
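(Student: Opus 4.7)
The plan is to reformulate both topologies in terms of the canonical (Haagerup) implementation $\theta \mapsto U_\theta$ of automorphisms on $\rL^2(M)$ and then to use Theorem~\ref{main full compact} to bridge between them. Recall that $U_\theta$ is the unique unitary implementing $\theta$ on $M$, commuting with the modular conjugation $J$, and preserving the self-dual positive cone $\mathcal{P}^\natural$. Using the Powers--Stormer inequality and the bijection $\varphi \leftrightarrow \xi_\varphi$ between $M_*^+$ and $\mathcal{P}^\natural$, I would first verify that $\theta_i \to \id$ in the $u$-topology is equivalent to $U_{\theta_i} \to 1$ strongly, while $\theta_i \to \id$ in the $p$-topology is the a priori weaker condition that $\lambda(\theta_i(x)) = U_{\theta_i} \lambda(x) U_{\theta_i}^* \to \lambda(x)$ strongly for every $x \in M$ (and likewise for $\rho$, since $U_\theta$ also implements $\theta$ on the commutant). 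The direction $u \Rightarrow p$ is immediate, so everything reduces to $p \Rightarrow u$ under the fullness assumption.

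Assume now that $\theta_i \to \id$ in the $p$-topology. The next step is to promote this pointwise statement to the entire $C^*$-algebra $C^*_{\lambda \cdot \rho}(M)$. A standard bilinear-boundedness argument first gives $U_{\theta_i} T U_{\theta_i}^* \to T$ strongly for $T$ in the algebraic $\ast$-subalgebra generated by $\lambda(M)$ and $\rho(M)$, and the uniform bound $\|U_{\theta_i}\|=1$ together with a norm-approximation $\epsilon/3$ argument extends this to every $T \in C^*_{\lambda \cdot \rho}(M)$. At this point Theorem~\ref{main full compact} enters: fullness of $M$ provides $\K(\rL^2(M)) \subset C^*_{\lambda \cdot \rho}(M)$, so the conjugations $T \mapsto U_{\theta_i}T U_{\theta_i}^*$ converge strongly to the identity on every compact operator, and in particular on every rank-one projection.

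The main obstacle will be converting this control of conjugations into genuine strong convergence of $U_{\theta_i}$ itself, and this is where the positive cone $\mathcal{P}^\natural$ becomes essential. For a unit vector $\xi \in \mathcal{P}^\natural$ with associated rank-one projection $P_\xi$ onto $\C\xi$, strong convergence of $U_{\theta_i} P_\xi U_{\theta_i}^* = P_{U_{\theta_i}\xi}$ to $P_\xi$, applied to the vector $\xi$, yields $\langle \xi, U_{\theta_i}\xi\rangle\, U_{\theta_i}\xi \to \xi$ in norm. Since $U_{\theta_i}$ preserves $\mathcal{P}^\natural$, self-duality of the positive cone forces $\langle \xi, U_{\theta_i}\xi\rangle \geq 0$; taking norms then forces this scalar to tend to $1$, whence $U_{\theta_i}\xi \to \xi$ in norm. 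The dense linear span of $\mathcal{P}^\natural$ in $\rL^2(M)$ combined with the uniform bound $\|U_{\theta_i}\|=1$ upgrades this to $U_{\theta_i} \to 1$ strongly, i.e.\ $\theta_i \to \id$ in the $u$-topology, completing the proof.
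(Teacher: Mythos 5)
Your proposal is correct and takes essentially the same route as the paper: pass to the Haagerup standard implementation $U_{\theta_i}$, show $p$-convergence forces $U_{\theta_i}TU_{\theta_i}^* \to T$ strongly for all $T \in C^*_{\lambda\cdot\rho}(M)$, invoke Theorem~\ref{main full compact} to get this on rank-one projections onto positive-cone vectors, and use the positivity/self-duality of $\mathcal{P}^\natural$ together with Powers--St\o rmer to conclude. The only cosmetic difference is that the paper deduces $\|\theta_i(\varphi)^{1/2}-\varphi^{1/2}\|\to 0$ for each $\varphi\in M_*^+$ and applies Araki--Powers--St\o rmer directly, whereas you first package everything as $U_{\theta_i}\to 1$ strongly via density of the span of $\mathcal{P}^\natural$ and record the equivalence with $u$-convergence separately; these are the same argument.
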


\subsection*{Bicentralizer flow}
This section is motivated by the following fundamental problem in the theory of von Neumann algebras.
\begin{question}
Let $M$ be a factor with separable predual. Can we find large amenable subfactors in $M$? More precisely, is there an amenable subfactor $P$ with a normal conditional expectation in $M$ such that $P' \cap M=\C$?
\end{question}
When $M$ is not of type $\III_1$, this problem was completely solved by Popa. More precisely, it follows from \cite{Po81}, that for any factor $M$ of type $\II_1$ or of type $\III_\lambda$ with $\lambda \in ]0,1[$, one can construct such a subfactor $P$ of type $\II_1$. If $M$ is of type $\II_\infty$, one can take $P$ of type $\II_\infty$ (but not of type $\II_1$). Finally, by \cite{Po83}, if $M$ is of type $\III_0$, one can take $P$ of type $\III_0$ (but not of type $\II$). The existence of such irreducible amenable subfactors is a very important tool, as one can often use it to reduce problems on general factors to the amenable case. See for instance \cite{Po18} for a very recent application of this idea.

When $M$ is of type $\III_1$, very little is known except that this problem is intimately related to Connes' bicentralizer problem which we now recall. Let $\varphi$ be a faithful normal state on $M$. The \emph{bicentralizer} of $\varphi$, denoted by $\B(M,\varphi)$ is the set of all elements $x \in M$ such that $\lim_n \|a_n x-x a_n\|_\varphi= 0$ for any bounded sequence $(a_n)_{n \in \N}$ in $M$ which satisfies $\lim_n \|\varphi(a_n \bdot)-\varphi(\bdot a_n)\|=0$. The bicentralizer $\B(M,\varphi)$ is a von Neumann subalgebra of $M$ and Connes' bicentralizer problem asks whether for any type $\III_1$ factor $M$, we have $\B(M,\varphi)=\C$. This is one of the most famous open questions in the theory of type $\III$ factors. By solving it when $M$ is amenable \cite{Ha85}, Haagerup settled the question of the uniqueness of the AFD type $\III_1$ factor which, after Connes' work, was the last remaining case in the classification program. In the same paper, inspired by Popa's technique, Haagerup also showed that a type $\III_1$ factor $M$ has trivial bicentralizer if and only if one can find an irreducible hyperfinite $\II_1$ factor with expectation in $M$, or if and only if there exists a faithful normal state $\varphi$ on $M$ which has a \emph{large centralizer}, i.e.\  $M_\varphi' \cap M=\C$.

When working on the bicentralizer problem, Connes realized (by using the Connes-St\o rmer transitivity theorem) that the bicentralizer $\B(M,\varphi)$ does not depend on the choice of the faithful normal state $\varphi$, up to canonical isomorphism. Therefore, one can speak of the \emph{canonical} bicentralizer $\B(M)$ of a given type $\III_1$ factor. This observation of Connes was recently enhanced in \cite[Theorem A]{AHHM18} in order to show the existence of a canonical flow $\beta: \R^*_+ \curvearrowright \B(M)$ called the \emph{bicentralizer flow}. Moreover, by adapting Popa and Haagerup ideas, it was shown in \cite[Theorem C]{AHHM18} that the bicentralizer flow $\beta : \R^*_+ \curvearrowright \B(M)$ captures all the information about the possible existence of irreducible AFD subfactors with expectation in $M$ and their possible types. In the second main theorem of this paper, we show that the bicentralizer flow is always ergodic. As a consequence, one can always find irreducible AFD type $\III$ subfactors with expectation in $M$.

\begin{letterthm} \label{main bicentralizer}
Let $M$ be a type $\III_1$ factor. Then $\beta_\lambda \curvearrowright \B(M)$ is ergodic for every $\lambda \in \R^*_+ \setminus \{1\}$. In particular, we have:
\begin{enumerate}
\item [$(\rm i)$] If $M$ has separable predual, then for every $\lambda \in ]0,1]$, we can find an AFD type $\III_\lambda$ subfactor with expectation $P \subset M$ such that $P ' \cap M =\C$.
\item [$(\rm ii)$] If $M \cong M \ovt R_\lambda$ for some $\lambda \in ]0,1[$, then $M$ has trivial bicentralizer.
\end{enumerate}
\end{letterthm}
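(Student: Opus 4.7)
The plan is to reduce ergodicity of $\beta_\lambda$ to the technical criterion advertised in the introduction, applied to the self-adjoint generator $X = \log \Delta_\varphi$ of the modular group; the two consequences will then follow from results already established in \cite{AHHM18}. Fix a faithful normal state $\varphi$ on $M$, use the canonical identification from \cite{AHHM18} to realize $\beta_\lambda$ as the restriction of $\sigma^\varphi_{t_0}$ to $\B(M,\varphi)$ for an appropriate $t_0 = t_0(\lambda) \neq 0$, and argue by contradiction: assume there exists a self-adjoint $x \in \B(M,\varphi) \setminus \C$ with $\sigma^\varphi_{t_0}(x) = x$ and $\|x\| \leq 1$.

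The first main step is to manufacture a state $\Psi \in \B(\rL^2(M))^*$ that both nontrivially detects $x$ and is \emph{strongly invariant} under $(\Delta^{it}_\varphi)_{t\in\R}$ in the precise sense required by the criterion. A natural starting point is $\omega_\xi$ with $\xi = x\xi_\varphi$: the identity $\sigma^\varphi_{t_0}(x) = x$ immediately yields $\Delta^{it_0}_\varphi \xi = \xi$, so $\omega_\xi$ is already $\Delta^{it_0}_\varphi$-invariant, but strong invariance under the full modular one-parameter group fails in general because in type $\III_1$ the vector $\xi$ is not a true eigenvector of $\log \Delta_\varphi$. I would upgrade $\omega_\xi$ to a strongly invariant $\Psi$ by an averaging or weak-$\ast$ limit procedure along the modular orbit, and use the bicentralizer property of $x$ (which controls perturbations arising from asymptotically centralizing sequences) to show that the resulting $\Psi$ still sees $x$ nontrivially. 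This verification of strong invariance, exploiting both hypotheses on $x$ simultaneously, is the principal obstacle.

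With strong invariance in hand, the criterion yields a decomposition $\Psi = \int_\R \Psi_r \, d\nu(r)$ in which each $\Psi_r$ is a limit of vector states on approximate $r$-eigenvectors of $\log \Delta_\varphi$. Pulling these back to $M$ produces approximately $e^r$-homogeneous sequences, from which Popa/Haagerup-type constructions manufacture asymptotically centralizing sequences; their asymptotic commutation with $x$ then forces $\nu$ to be concentrated at $r = 0$, equivalently $x \xi_\varphi \in \C \xi_\varphi$, which gives $x \in \C$ and contradicts the choice of $x$.

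For the corollaries: (i) is a direct application of \cite[Theorem C]{AHHM18}, which translates ergodicity of each $\beta_\lambda$ into the existence of an irreducible AFD type $\III_\lambda$ subfactor with expectation in $M$. For (ii), the hypothesis $M \cong M \ovt R_\lambda$ forces $\beta_\lambda$ to be trivial on $\B(M)$, since at the appropriate modular period the flow becomes inner on the $R_\lambda$ tensor factor and hence acts trivially on the bicentralizer; combined with ergodicity of $\beta_\lambda$, this immediately yields $\B(M) = \C$.
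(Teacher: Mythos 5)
Your proposal misses the paper's key structural move. The paper does not try to prove ergodicity of $\beta_\lambda$ directly for a general type $\III_1$ factor $M$. Instead, it first proves the stronger statement that any type $\III_1$ factor with $M \cong M \ovt R_\infty$ has \emph{trivial} bicentralizer (using Haagerup's criterion from \cite[Theorem 7.2]{AHHM18} together with Lemma \ref{lem binormal invariant}, Theorem \ref{thm state strong invariant}, and Lemma \ref{lem absorb}), and then reduces the general ergodicity claim to this special case via \cite[Theorem B]{AHHM18}: if the fixed point algebra $\B(M,\varphi)^{\beta^\varphi}$ were nontrivial it would be a self-bicentralizing type $\III_1$ factor with trivial bicentralizer flow, hence would absorb $R_\infty$, contradicting the first step. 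This reduction is not cosmetic. Lemma \ref{lem absorb} --- the step that shifts the asymptotic eigenvalue of the strongly invariant state to $0$ so that the decomposition into approximate eigenstates can produce centralizing sequences --- requires $M \cong M \ovt R_\infty$, and your direct approach has no substitute for it.

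Two further points in your outline are problematic. First, the claimed identification of $\beta_\lambda$ with the restriction of $\sigma^{\varphi}_{t_0}$ to $\B(M,\varphi)$ is not what \cite{AHHM18} provides; the bicentralizer flow $\beta^\varphi$ is a distinct canonical $\R^*_+$-flow, not the modular flow of $\varphi$ restricted to the bicentralizer, so the starting identity $\Delta^{\ri t_0}_\varphi \xi = \xi$ with $\xi = x\xi_\varphi$ does not arise from the hypothesis $\beta_\lambda(x)=x$. Second, you yourself flag the construction of a strongly $\sigma$-invariant state that still detects $x$ as ``the principal obstacle,'' and the averaging procedure you sketch cannot close this gap as stated: for non-normal $\Psi$ the identity $\Psi(\sigma_\mu(T)) = \int \Psi(\sigma_t(T))\,\rd\mu(t)$ fails, which is exactly the subtlety the paper's notion of strong invariance and Lemma \ref{lem binormal invariant} are designed to handle (by first restricting to $C^*_{\lambda\cdot\rho}(M)$ where norm continuity is available, then extending via Proposition \ref{prop strong fixed point}). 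Finally, for item (ii) the paper simply invokes \cite[Theorem B.(iii)]{AHHM18}; your heuristic that $M \cong M \ovt R_\lambda$ makes $\beta_\lambda$ inner on the ``$R_\lambda$ tensor factor'' is unjustified because $\B(M)$ does not split as a tensor product compatible with the isomorphism $M \cong M \ovt R_\lambda$.
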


We already explained the motivation for item $(\rm i)$ but we would like to also emphasize the importance of item $(\rm ii)$. Indeed, in the context of deformation/rigidity, it is often essential to have large centralizers in order to be able to use Popa's intertwining theory effectively. But, we observe (\cite[Lemma 6.1]{HMV16}) that for any inclusions with expectation $A,B \subset M$, one has $A \prec_M B$ if and only if $A \ovt R_\infty \prec_{M \ovt R_\infty} B \ovt R_\infty$ where $\prec_M$ is Popa's intertwining notation, and by item $(\rm ii)$, all these algebras have large centralizers. One can thus carry out the deformation/rigidity argument in the stabilized algebra $M \ovt R_\infty$ and then deduce the result in $M$. For example, one can in this way remove the large centralizer restrictions in \cite[Definition 1.1 and Theorem A]{Is17}. See also \cite[Application 4]{AHHM18}.

Item $(\rm ii)$ should also be compared with \cite[Theorem 3.5]{HI15} which shows that if the answer to the bicentralizer problem is negative then there exists a counter-example $M$ which is \emph{McDuff}, i.e.\ $M \cong M \ovt R$ where $R$ is a hyperfinite $\II_1$ factor. 

Finally, let us point out that a possible strategy to solve the bicentralizer problem is to show that $\beta_\lambda$ is approximately inner for some $\lambda \in ]0,1[$. Indeed, by \cite[Theorem B]{AHHM18}, this would automatically imply that $\beta_\lambda$ is trivial, hence that the bicentralizer itself is trivial since $\beta_\lambda$ is ergodic. This strategy was our original motivation for the next section.

\subsection*{Approximately inner automorphisms}
Let $M$ be a factor. We say that an automorphism $\theta \in \Aut(M)$ is \emph{weakly inner}\footnote{The terminology is justified by the fact that this property is equivalent to the correspondance $\rL^2(\theta)$ being \emph{weakly contained} in $\rL^2(M)$.} if there exists an automorphism $\alpha$ of $C^*_{\lambda \cdot \rho}(M)$ such that $\alpha(\lambda(a)\rho(b))=\lambda(\theta(a))\rho(b)$ for all $a,b \in M$.

It is easy to see that any approximately inner automorphism is weakly inner. On the other hand, when $M$ is full, it follows from Theorem \ref{main full compact}, that any weakly inner automorphism must be inner (see Proposition \ref{prop weak inner full}). When $M$ is an injective factor, every automorphism is weakly inner because in that case, one can take $\alpha=\theta \otimes \id$ on $C^*_{\lambda \cdot \rho}(M) \cong \lambda(M) \otimes_{\min} \rho(M)$.  In \cite{Co75b}, Connes proved the fundamental result that when $M$ is a $\II_1$ factor, an automorphism of $M$ is weakly inner if and only if it is approximately inner. In particular, every automorphism of an injective $\II_1$ factor is approximately inner. This was another key step to prove that injectivity implies hyperfiniteness. Connes also used this property to show that for any $\II_1$ factors $M$ and $N$, we have that $\theta_1 \otimes \theta_2 \in \Aut(M \ovt N)$ is approximately inner if and only if both $\theta_1$ and $\theta_2$ are approximately inner.

In the infinite case however, a weakly inner automorphism need not be approximately inner. For example, if $M$ is a $\II_\infty$ factor, then an automorphism which scales the trace cannot be approximately inner (but it is weakly inner if $M$ is injective). More generally, every automorphism $\theta$ of a factor $M$ induces an automorphism $\mathrm{mod}(\theta)$ of the \emph{flow of weights} of $M$ \cite{CT76} and it is easy to see that if $\theta$ is approximately inner then $\mathrm{mod}(\theta)$ must be trivial. Hence, motivated by the problem of the uniqueness of the AFD type $\III_1$ factor, Connes was very naturally led to the following conjecture.

\begin{letterconj}[{\cite[Section IV]{Co85}}] \label{connes conj}
Let $M$ be a factor and $\theta$ an automorphism of $M$. Then $\theta$ is approximately inner if and only if it is weakly inner and $\mathrm{mod}(\theta)$ is trivial.
\end{letterconj}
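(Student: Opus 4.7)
The forward direction is standard: if $\theta = \lim_n \Ad(u_n)$ in the pointwise norm topology on $M_*$, then $\alpha = \lim_n \Ad(\lambda(u_n))$ defines an automorphism of $C^*_{\lambda \cdot \rho}(M)$ with the required intertwining property, since each $\lambda(u_n)$ commutes with $\rho(M)$. The triviality of $\mathrm{mod}(\theta)$ follows because $\theta$ lifts to an approximately inner automorphism of the continuous core $M \rtimes_\sigma \R$, which therefore preserves the dual trace and acts trivially on its center.

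For the converse I would try to adapt Connes' original $\II_1$ argument, using the main technical tool of the paper to bypass the obstructions caused by the absence of a trace. Suppose $\theta$ is weakly inner, witnessed by some $\alpha \in \Aut(C^*_{\lambda \cdot \rho}(M))$, and that $\mathrm{mod}(\theta)$ is trivial. Pick a faithful normal state $\varphi$ on $M$ with standard implementing vector $\xi_\varphi \in \rL^2(M)$, and consider the state $\Psi = \omega_{\xi_\varphi} \circ \alpha$ on $C^*_{\lambda \cdot \rho}(M)$. A direct computation shows that $\theta$ is approximately inner if and only if $\Psi$ lies in the weak-$*$ closure of the convex hull of the vector states $\omega_{\lambda(u)\xi_\varphi}$ with $u \in \cU(M)$; this reformulates approximate innerness as an approximation problem for states on $C^*_{\lambda \cdot \rho}(M)$.

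Next I would apply the main technical tool to the self-adjoint generator $X = \log \Delta_\varphi$ of modular time. Triviality of $\mathrm{mod}(\theta)$ should translate into invariance of $\Psi$ under the one-parameter group $\Ad(\Delta_\varphi^{\ri t})$ in the strong sense required by that tool, so that $\Psi$ can be written as a limit of convex combinations of approximate eigenstates of $X$. The plan would then be to show that each such approximate eigenstate, which is a vector state $\omega_\eta$ for some unit vector $\eta \in \rL^2(M)$ concentrated on a narrow spectral subspace of $\log \Delta_\varphi$, can be replaced, up to small error in the relevant dual norm, by a vector state $\omega_{\lambda(u)\xi_\varphi}$ with $u \in \cU(M)$. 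A suitable averaging together with the convex representation of $\Psi$ would then produce the desired net of approximating unitaries inside $M$.

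The decisive obstacle sits precisely at this last step, and by the announcement of a counter-example in the abstract it cannot hold without further hypotheses. A unit vector $\eta \in \rL^2(M)$ that is almost eigen for $\log \Delta_\varphi$ has no reason to belong to the orbit $\lambda(\cU(M))\xi_\varphi$; this is exactly the failure one should expect to be responsible for the counter-example. The weaker theorem proved in the paper presumably replaces this missing step by an additional assumption, likely involving the structure of the bicentralizer flow from Theorem B or some form of ergodicity of $\theta$ on a centralizer of $\varphi$, that does guarantee enough approximating unitaries inside $M$.
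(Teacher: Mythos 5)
You correctly recognize that the converse direction cannot be established, and your diagnosis of where the attempt breaks down is essentially the right one; but you stop too early. There is no ``paper's proof'' of Conjecture~\ref{connes conj} to compare against: the paper \emph{refutes} it, and the task for a blind attempt here is to produce the counterexample, not to concede failure and gesture at a weaker theorem. The counterexample is short. Take $N$ a full type $\III_1$ factor, $P$ an amenable factor, and $\theta \in \Aut(P)$ with $\mathrm{mod}(\theta)$ nontrivial, and set $M = N \ovt P$. Then $\id \otimes \theta$ is weakly inner, because $P$ injective gives $C^*_{\lambda\cdot\rho}(P) \cong \lambda(P)\otimes_{\min}\rho(P)$ and one may take $\alpha = \id \otimes (\theta \otimes \id)$ on $C^*_{\lambda\cdot\rho}(N) \otimes_{\min} C^*_{\lambda\cdot\rho}(P)$. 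But $\id \otimes \theta$ is not approximately inner: by \cite[Theorem A]{HMV16}, fullness of $N$ forces $\id \otimes \theta \in \Aut(N \ovt P)$ to be approximately inner if and only if $\theta \in \Aut(P)$ is, and $\theta$ cannot be since $\mathrm{mod}(\theta) \neq \id$. Meanwhile $M$ is of type $\III_1$, so its flow of weights is trivial and $\mathrm{mod}(\id \otimes \theta)$ is automatically trivial. Conjecture~\ref{connes conj} is therefore false.

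Your outline of the would-be converse does, usefully, reproduce the skeleton of Theorem~\ref{main weakly inner}, the corrected replacement: pass from $\alpha$ to a state on $C^*_{\lambda\cdot\rho}(M)$ (the paper takes $T \mapsto \langle \alpha^{-1}(T)\varphi_1^{1/2},\varphi_1^{1/2}\rangle$ with $\Delta = \Delta_{\varphi_2,\varphi_1}$, $\varphi_2 = \theta(\varphi_1)$), extend to a strongly $\sigma$-invariant state on $\B(\rL^2(M))$ via Lemma~\ref{lem binormal invariant}, then decompose into approximate eigenstates of $\log\Delta$ via Theorem~\ref{thm state strong invariant}. The obstruction you name---that an approximate eigenstate need not arise from a vector in $\lambda(\cU(M))\varphi^{1/2}$---is indeed the crux, but the hypothesis that repairs it is not an ergodicity condition on $\theta$ or the bicentralizer; it is $M \cong M \ovt R_\infty$. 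Under this assumption Lemma~\ref{lem absorb} lets one shift the asymptotic eigenvalue of any approximate eigenstate to $0$ without changing its restriction to $C^*_{\lambda\cdot\rho}(M)$, by multiplying by (co)isometries from an embedded copy of $R_\infty$ whose modular spectrum absorbs the drift. With asymptotic eigenvalue $0$ the implementing vectors become $x_i\varphi_1^{1/2}$ with $\|x_i\varphi_1^{1/2} - \varphi_2^{1/2}x_i\| \to 0$, which is exactly what Connes' criterion \cite[Theorem III.1]{Co85} needs. Finally, a minor point: your forward direction is sound in conclusion, but ``$\alpha = \lim_n \Ad(\lambda(u_n))$'' is not literally how one gets the automorphism of $C^*_{\lambda\cdot\rho}(M)$; rather one checks directly that $\bigl\|\sum_k\lambda(\theta(a_k))\rho(b_k)\bigr\| = \bigl\|\sum_k\lambda(a_k)\rho(b_k)\bigr\|$ by conjugating by $\lambda(u_n)$ and taking limits in both directions.
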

Note that in the case of type $\III_1$ factors, the flow of weights is trivial and therefore $\mathrm{mod}(\theta)$ is always trivial. So Conjecture \ref{connes conj} would imply in particular that any weakly inner automorphism of a type $\III_1$ factor is approximately inner. In particular, it would solve the bicentralizer problem because we know that the bicentralizer flow is weakly inner (in fact it satisfies a stronger property \cite[Proposition 4.1.$(\rm iv)$]{AHHM18} which seems to be closely related to the notion of ``locally approximately inner" automorphism of a $\II_\infty$ factor introduced in \cite[Definition 2.4]{Po93}).

Conjecture \ref{connes conj} has been verified for injective factors in \cite{KST92}. In \cite[Section IV]{Co85}, Connes claimed that he could verify it for type $\III_1$ factors with trivial bicentralizer and in \cite{GM93}, the authors claimed, also without proof, that they could verify it for factors of type $\III_\lambda, \: \lambda \in ]0,1[$. Unfortunately, it turns out that Conjecture \ref{connes conj} is not true. Indeed, let $N$ be a full factor and $P$ an amenable factor and let $M=N \ovt P$. Then for any $\theta \in \Aut(P)$, the automorphism $\id \otimes \theta \in \Aut(M)$ is weakly inner. Since $N$ is full, we know by \cite[Theorem A]{HMV16} that $\id \otimes \theta$ is approximately inner if and only if $\theta$ itself is approximately inner. Therefore, if $N$ is of type $\III_1$ and $\mathrm{mod}(\theta)$ is non-trivial, then $M$ is a type $\III_1$ factor and $\id \otimes \theta$ is weakly inner but not approximately inner. One can also construct in this way counter-examples of type $\III_\lambda$ for $\lambda \in ]0,1[$. We de not know if there exists type $\III_0$ counter-examples.

This shows that there might not be any simple criterion to determine when a weakly inner automorphism is approximately inner. Instead, we show in the next main theorem that these two properties become equivalent once we stabilize by the AFD type $\III_1$ factor $R_\infty$.

\begin{letterthm} \label{main weakly inner}
Let $M$ be a type $\III_1$ factor such that $M \cong M \ovt R_\infty$. Then every weakly inner automorphism of $M$ is approximately inner.

In particular, for any factor $N$ we have
$$ \theta \in \Aut(N) \text{ is weakly inner} \; \Leftrightarrow \; \theta \otimes \id \in \Aut(N \ovt R_\infty) \text{ is approximately inner}.$$
\end{letterthm}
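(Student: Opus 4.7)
The plan is to combine Theorem \ref{main bicentralizer} with Connes--Størmer transitivity and Connes's classical theorem for type $\II_1$ factors, and to use the paper's approximate-eigenstate criterion to transfer the approximation from a large centralizer to the whole of $M$.

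First, note that $R_\infty \ovt R_\lambda \cong R_\infty$ for every $\lambda \in \,]0,1[$, since both sides are AFD type $\III_1$ factors. Hence $M \cong M \ovt R_\infty \cong M \ovt R_\lambda$, and Theorem \ref{main bicentralizer}$(\rm ii)$ gives $\B(M)=\C$. By the Haagerup--Popa characterization, $M$ admits a faithful normal state $\varphi$ with large centralizer: $M_\varphi$ is a type $\II_1$ factor satisfying $M_\varphi' \cap M = \C$. Given a weakly inner $\theta \in \Aut(M)$, the Connes--Størmer transitivity theorem allows one to modify $\theta$ by an inner perturbation so that $\varphi \circ \theta = \varphi$; then $\theta$ preserves $M_\varphi$ and its canonical implementation $V_\theta$ on $\rL^2(M)$ fixes $\xi_\varphi$.

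I would next argue that the restriction $\theta_0 = \theta|_{M_\varphi}$ is weakly inner, in Connes's sense, on the type $\II_1$ factor $M_\varphi$. This ought to follow by descending the weak containment of $M$-$M$ bimodules $\rL^2_\theta(M) \prec \rL^2(M)$ (which is the bimodule reformulation of weak innerness) to the sub-bimodules obtained by compressing to $\xi_\varphi \in \rL^2(M_\varphi)$, yielding $\rL^2_{\theta_0}(M_\varphi) \prec \rL^2(M_\varphi)$. Connes's theorem in the tracial setting then produces unitaries $u_n \in \cU(M_\varphi)$ with $\Ad(u_n) \to \theta_0$ in the $u$-topology on $M_\varphi$.

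The heart of the proof is the lifting step: promoting $\Ad(u_n) \to \theta$ from $M_\varphi$ to the whole of $M$. This is where the paper's main technical innovation enters. On $\B(\rL^2(M))$ consider the state $\Psi = \omega_{V_\theta \xi_\varphi}$ together with the self-adjoint generator $X = \log \Delta_\varphi$ of the unitary group implementing $\sigma^\varphi$. Because $\theta$ preserves $\varphi$, $\Psi$ satisfies the strong $(e^{\ri t X})$-invariance hypothesis of the criterion, and therefore lies in the closed convex hull of approximate eigenstates of $X$; these approximate eigenvectors can in turn be realized as vectors of the form $u_n \xi_\varphi$ with $u_n \in \cU(M_\varphi)$, so that $\omega_{u_n \xi_\varphi} \to \Psi$, which forces $\Ad(u_n) \to \theta$ pointwise strongly on all of $M$. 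This lifting is the main obstacle: an approximate inner approximation on the centralizer $M_\varphi$ does not extend to $M$ by any naive procedure, and bridging the gap is exactly what the convex-hull-of-approximate-eigenstates criterion is tailored for. Once the main statement is established, the ``in particular'' clause is immediate: $M := N \ovt R_\infty$ satisfies $M \cong M \ovt R_\infty$ because $R_\infty \ovt R_\infty \cong R_\infty$, so the main theorem applies, and $\theta \otimes \id$ is weakly inner on $N \ovt R_\infty$ precisely when $\theta$ is weakly inner on $N$.
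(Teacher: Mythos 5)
Your plan --- reduce to the large centralizer $M_\varphi$ via Theorem~\ref{main bicentralizer}, invoke Connes' $\II_1$ theorem, then ``lift'' the approximate innerness back to $M$ --- is a fundamentally different route from the paper, but the execution has a genuine gap at the crucial lifting step, and two of the preparatory claims are also not justified as stated.

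The central problem is the lifting step. If you first arrange (approximately, see below) that $\varphi\circ\theta=\varphi$, then the canonical implementation $V_\theta$ fixes $\xi_\varphi$, so your state $\Psi=\omega_{V_\theta\xi_\varphi}$ is just the vector state $\omega_{\xi_\varphi}$, which carries \emph{no information about $\theta$ whatsoever}. Its approximate eigenstates for $X=\log\Delta_\varphi$ are merely states living asymptotically on the centralizer, and the conclusion ``$\omega_{u_n\xi_\varphi}\to\Psi$'' says only that $u_n\xi_\varphi\to\xi_\varphi$ up to phase; it cannot force $\Ad(u_n)\to\theta$. The paper's proof works precisely because it encodes $\theta$ into a state: it sets $\omega_\theta(T)=\langle\alpha^{-1}(T)\varphi_1^{1/2},\varphi_1^{1/2}\rangle$ on $C^*_{\lambda\cdot\rho}(M)$, where $\alpha$ is the automorphism provided by weak innerness, and then uses Lemma~\ref{lem binormal invariant} to extend this $\sigma$-invariant state to a \emph{strongly} $\sigma$-invariant state on $\B(\rL^2(M))$ (not a normal state, and in particular not a vector state). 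Lemma~\ref{lem absorb} then moves it into $\conv\mathcal{E}_0(\log\Delta_{\varphi_2,\varphi_1})$; disintegrating and evaluating on $|\lambda(\theta(a_k))-\rho(b_k)|^2$ and $|\rho(b_k)|^2$ yields exactly the vectors needed for Connes' criterion \cite[Theorem~III.1]{Co85}. This construction of $\omega_\theta$ via $\alpha$ is where the hypothesis of weak innerness enters, and it is entirely absent from your argument.

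Two further points. First, Connes--Størmer transitivity only gives, for each $\varepsilon>0$, a unitary $u$ with $\|u(\varphi\circ\theta)u^*-\varphi\|<\varepsilon$; it does not give exact conjugacy, so you cannot in general perturb $\theta$ by an inner automorphism to make it preserve $\varphi$ on the nose. Second, the claimed weak innerness of $\theta_0=\theta|_{M_\varphi}$ does not follow from compressing $\rL^2_\theta(M)\prec\rL^2(M)$ to $\xi_\varphi$: that compression lands in $\rL^2(M)$ viewed as an $M_\varphi$-bimodule, which is much larger than $\rL^2(M_\varphi)$ and need not be weakly contained in it. So even the reduction to the $\II_1$ factor $M_\varphi$ is not established. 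It is worth noting that the paper's argument deliberately bypasses Theorem~\ref{main bicentralizer} and the large-centralizer route altogether: it applies the approximate-eigenstate machinery directly to the relative modular operator $\Delta_{\theta(\varphi_1),\varphi_1}$ and does not restrict to a centralizer at any stage.
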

This result gives nothing new for the bicentralizer flow, as one can already show that $\beta_\lambda \otimes \id \curvearrowright \B(M) \ovt R_\lambda$ is approximately inner for every $\lambda \in ]0,1[$ (see Proposition \ref{prop bic approx}). Therefore, for now, we do not have any application of Theorem \ref{main weakly inner} besides the fact that it gives a new and more direct proof of the following result of Kawahigashi, Sutherland and Takesaki.

\begin{lettercor}[KST92] \label{cor injective} Every automorphism of the AFD type $\III_1$ factor $R_\infty$ is approximately inner.
\end{lettercor}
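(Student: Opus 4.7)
The plan is to derive this corollary as a direct combination of Theorem \ref{main weakly inner} with two classical facts about the AFD type $\III_1$ factor $R_\infty$: that it is injective and that it tensorially absorbs itself. The main obstacle has, in effect, already been overcome by Theorem \ref{main weakly inner}; what remains is only an unpacking step.

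First I would recall that $R_\infty$ is injective, so the $C^*$-algebra $C^*_{\lambda \cdot \rho}(R_\infty)$ is canonically isomorphic to $\lambda(R_\infty) \otimes_{\min} \rho(R_\infty)$, as already noted in the discussion preceding Conjecture \ref{connes conj}. Given any $\theta \in \Aut(R_\infty)$, the map $\theta \otimes \id$ on $\lambda(R_\infty) \otimes_{\min} \rho(R_\infty)$ is then a $C^*$-automorphism satisfying $\alpha(\lambda(a)\rho(b)) = \lambda(\theta(a))\rho(b)$ for all $a,b \in R_\infty$, so $\theta$ is weakly inner in the sense of the definition given above. Thus every element of $\Aut(R_\infty)$ is automatically weakly inner.

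Next I would invoke the uniqueness of the AFD type $\III_1$ factor (Haagerup/Connes), from which $R_\infty \cong R_\infty \ovt R_\infty$: the tensor square is AFD and still of type $\III_1$, so it must coincide with $R_\infty$. This places $R_\infty$ in the hypothesis of Theorem \ref{main weakly inner}, namely it is a type $\III_1$ factor satisfying $M \cong M \ovt R_\infty$ with $M=R_\infty$. Applying that theorem, every weakly inner automorphism of $R_\infty$ is approximately inner.

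Combining the two observations yields the corollary: an arbitrary $\theta \in \Aut(R_\infty)$ is weakly inner by injectivity and hence approximately inner by Theorem \ref{main weakly inner}. The only subtle point in the whole argument is the appeal to Haagerup's uniqueness theorem for the AFD type $\III_1$ factor to guarantee $R_\infty \ovt R_\infty \cong R_\infty$; if one wished to avoid this, one could instead apply Theorem \ref{main weakly inner} directly to $M = R_\infty \ovt R_\infty$ and use that the approximate innerness of $\theta \otimes \id$ on $R_\infty \ovt R_\infty$ implies that of $\theta$ on $R_\infty$ (via a conditional expectation argument, since $R_\infty$ is injective), but invoking uniqueness is cleaner.
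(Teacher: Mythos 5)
Your argument is correct and matches the paper's intended (implicit) derivation: injectivity of $R_\infty$ makes every automorphism weakly inner via $\alpha = \theta \otimes \id$ on $C^*_{\lambda\cdot\rho}(R_\infty) \cong \lambda(R_\infty)\otimes_{\min}\rho(R_\infty)$, the self-absorption $R_\infty \cong R_\infty \ovt R_\infty$ puts $R_\infty$ in the scope of Theorem \ref{main weakly inner}, and the conclusion follows. One small remark: invoking Haagerup's uniqueness theorem to get $R_\infty \ovt R_\infty \cong R_\infty$ is fine and non-circular, but it is worth knowing that this self-absorption also falls out directly from any standard construction of $R_\infty$ (e.g.\ as $R_\lambda \ovt R_\mu$ with $\log\lambda/\log\mu \notin \Q$, or as an infinite tensor product), so the corollary does not secretly depend on the full strength of the bicentralizer resolution for amenable factors.
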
 

We also mention that for $\lambda \in ]0,1[$, we have a type $\III_\lambda$ analogue of Theorem \ref{main weakly inner} (see Theorem \ref{thm weak inner lambda}) which also implies that an automorphism $\theta$ of $R_\lambda$ is approximately inner if and only if $\mathrm{mod}(\theta)$ is trivial. The type $\III_0$ case remains unclear.

\subsection*{Acknowledgments} We are very grateful to Hiroshi Ando and Cyril Houdayer for all their valuable comments which greatly improved the exposition of this paper. We also thank Sorin Popa for pointing out the relevance of \cite[Definition 2.4]{Po93}. Finally, we thank Aymeric Baradat for his useful remark on $\rL^1(\R)$.

\tableofcontents

\section{Notations and preliminaries} 
\subsection*{Basic notations}
Let $M$ be any von Neumann algebra. We denote by $M_\ast$ its predual, by $\mathcal U(M)$ its group of unitaries and by $\mathcal Z(M)$ its center. The unit ball of $M$ is denoted by $\Ball(M)$. If $\varphi \in M_*^+$ is a positive functional, we put $\| x \|_\varphi=\varphi(x^*x)^{1/2}$ for all $x \in M$. In this paper, all von Neumann algebras are assumed to be $\sigma$-finite (they carry a faithful normal state). However, we never assume separability of the predual unless explicitly stated.

\subsection*{Standard form}  Let $M$ be any von Neumann algebra. We denote by $\rL^2(M)$ the standard form of $M$ \cite{Ha73} and by $\lambda, \rho : M \rightarrow \B(\rL^2(M))$ the left and right regular representation of $M$ on $\rL^2(M)$. We have $\lambda(M)'=J\lambda(M)J=\rho(M)$ where $J : \rL^2(M) \rightarrow \rL^2(M)$ is the canonical antilinear involution. We let $C^*_{\lambda \cdot \rho}(M)$ denote the $C^*$-algebra generated by the $*$-algebra $\lambda(M)\rho(M)$. Note that $\rho$ is an anti-representation of $M$ (or a representation of $M^{\op}$) meaning that $\rho(ab)=\rho(b)\rho(a)$. We will often write $x \xi y = \lambda(x)\rho(y) \xi$ for all $x, y \in M$ and all $\xi \in \rL^2(M)$. The vector $J \xi$ will be also simply denoted by $\xi^{*}$ so that $(x\xi)^{*}=\xi^{*}x^{*}$. For every $\xi \in \rL^2(M)$ we denote by $|\xi| \in \rL^2(M)_+$ its positive part. If $\xi \in \rL^2(M)$ we denote by $\xi \xi^* \in M_*^+$ the positive functional on $M$ defined by $x \mapsto \langle x \xi, \xi \rangle$. For any positive functional $\varphi \in M_\ast^+$, there exists a unique $\xi \in \rL^2(M)_+$ such that $\xi^2=\varphi$. We denote it by $\varphi^{1/2} \in \rL^2(M)_+$. We then have $\|x\|_\varphi = \|x \varphi^{1/2}\|$ for all $x \in M$.

For every positive functional $\varphi \in M_\ast$, we have 
$$\left\{\eta \in \rL^2(M) : |\eta|^2 \leq \varphi \right\}  =  \Ball(M) \varphi^{1/2}$$ (see the discussion after \cite[Lemma 3.2]{Ma16} for further details). Moreover, by the polarization identity, we have that the set of \emph{$\varphi$-bounded} vectors
$$\left\{\eta \in \rL^2(M) : \exists \lambda \in \R_+, \;  |\eta|^2 \leq \lambda\varphi  \text{ and }  |\eta^{*}|^2 \leq \lambda\varphi  \right\} $$ is linearly spanned by $\left\{\eta \in \rL^2(M)_+ : \eta^2 \leq \varphi \right\}$.

Finally, we recall that for any pair of faithful normal states $\varphi_1,\varphi_2$ on $M$, the \emph{relative modular operator} $\Delta_{\varphi_1,\varphi_2}$ is the unique closed unbounded positive operator on $\rL^2(M)$ such that the graph of $\Delta_{\varphi_1,\varphi_2}^{1/2}$ is the closure of $\{ (x\varphi_2^{1/2},\varphi_1^{1/2}x) \mid x \in M \} \subset \rL^2(M) \oplus \rL^2(M)$. When $\varphi_1=\varphi_2=\varphi$, we simply denote it by $\Delta_\varphi$.

\subsection*{One-parameter groups} A strongly continuous one-parameter group of isometries on a Banach space $E$ is a morphism $\sigma : \R \rightarrow \mathrm{Isom}(E)$ such that $\lim_{t \to 0} \|\sigma_t(x)-x\|=0$ for all $x \in E$. We denote by $S(\R)$ the set of all probability measures on $\R$ and for all $\mu \in S(\R)$, we define
$$\forall x \in E, \quad \sigma_\mu(x)=\int_{t \in \R} \sigma_t(x) \rd \mu (t)$$
where we use the usual Bochner integral for functions with values in a Banach space. Then for any $\varphi \in E^*$, we have 
$$\varphi(\sigma_\mu(x))=\int_{t \in \R} \varphi(\sigma_t(x)) \rd \mu (t).$$
When $f \in \rL^1(\R)^+$ with $\|f\|_1=1$, we will also use the notation $\sigma_f$ where we view $f$ as a probability measure on $\R$.

Now, let $M$ be a von Neumann algebra. A \emph{flow} on $M$ is a morphism $\sigma : \R \rightarrow \Aut(M)$ which is continuous for the $u$-topology. This means that the induced action $\sigma_* : \R \curvearrowright M_*$ is a strongly continuous one-parameter group of isometries of the Banach space $M_*$. Then, for all $\mu \in S(\R)$, we define $\sigma_\mu : M \rightarrow M$ as the dual map of $(\sigma_*)_{\mu}:M_* \rightarrow M_*$. In this context, we will also use the notation 
$$\forall x \in M, \quad \sigma_\mu(x) = \int_{t \in \R} \sigma_t(x) \rd \mu (t)$$
but we emphasize the fact that this is \emph{not} a Bochner integral with values in the Banach space $M$. For this reason, the relation
$$\varphi(\sigma_\mu(x))=\int_{t \in \R} \varphi(\sigma_t(x)) \rd \mu (t),$$
which holds by definition if $\varphi \in M_*$, \emph{fails} in general if we only have $\varphi \in M^*$. However, if the function $t \mapsto \sigma_t(x) \in M$ is norm continuous then the expression above for $\sigma_\mu(x)$ becomes a true Bochner integral and the formula for $\varphi(\sigma_\mu(x))$ holds even when $\varphi \in M^*$. In that respect, it is important to recall that $\mathcal{A}= \{ x \in M \mid t \mapsto \sigma_t(x) \text{ is norm continuous} \}$ is a strongly dense $C^*$-subalgebra of $M$. In fact, for any $f \in \rL^1(\R)$ and every $x \in M$, we have $\sigma_f(x) \in \mathcal{A}$.

\section{Strongly invariant states and approximate eigenvectors}

\begin{df}
Let $H$ be a Hilbert space and let $X$ be a self-adjoint operator on $H$. We say that a state $\Phi \in \B(H)^*$ is an \emph{approximate eigenstate} of $X$ if there exists a net of vectors $(\xi_i)_{i \in I}$ in the domain of $X$ such that $\lim_i \langle T \xi_i, \xi_i \rangle=\Phi(T) $ for all $T \in \B(H)$ and 
$$\lim_i \inf_{\lambda \in \R} \| (X-\lambda)\xi_i \|=0.$$
\end{df}

We denote by $\mathcal{E}(X)$ the set of all approximate eigenstates of $X$. The set $\mathcal{E}(X)$ is closed, hence compact, for the weak$^*$ topology. Indeed, we have
$$\mathcal{E}(X)=\bigcap_{\varepsilon > 0} \overline{\{ \langle \bdot \xi, \xi \rangle \mid \xi \in \dom(X), \; \inf_{\lambda \in \R} \| (X-\lambda)\xi \| \leq \varepsilon \} }^{\mathrm{w}^*}.$$
We denote by $\conv \mathcal{E}(X)$ the weak$^*$ closed convex hull of $\mathcal{E}(X)$. Since $\mathcal{E}(X)$ is compact, for any state $\Psi \in \B(H)^*$, we have $ \Psi \in \conv \mathcal{E}(X)$ if and only if $\Psi$ is the barycenter of some probability measure $\mu$ on $\mathcal{E}(X)$, i.e.\
$$\forall T \in \B(H), \quad \Psi(T)=\int_{\psi \in \mathcal{E}(X)} \psi(T) \rd \mu(\psi).$$
Intuitively, this means that $\Psi$ can be desintegrated into approximate eigenstates of $X$. The main theorem of this section provides a characterization of those states which admit such a nice decomposition.
\begin{thm} \label{thm state strong invariant}
Let $H$ be a Hilbert space and let $X$ be a self-adjoint operator on $H$. Let $\sigma : \R \curvearrowright \B(H)$ be the associated flow defined by $\sigma_t = \Ad(e^{\ri t X})$ for all $t \in \R$. Then for any state $\Psi \in \B(H)^*$, the following are equivalent:
\begin{itemize}
\item [$(\rm i)$] $\Psi \circ \sigma_\mu = \Psi$ for every probability measure $\mu$ on $\R$.
\item [$(\rm ii)$] $\Psi  \in \conv \mathcal{E}(X)$.
\end{itemize}
\end{thm}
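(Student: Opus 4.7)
For the easy direction $(ii) \Rightarrow (i)$, I would argue from the functional-calculus estimate $\|(e^{\ri t X} - e^{\ri t \lambda})\xi\| \leq |t| \cdot \|(X - \lambda)\xi\|$ valid on $\dom(X)$. Applied to a net $(\xi_i)$ implementing an approximate eigenstate $\Phi$, with scalars $\lambda_i$ satisfying $\|(X - \lambda_i)\xi_i\| \to 0$, this yields $\langle \sigma_t(T)\xi_i, \xi_i\rangle - \langle T\xi_i, \xi_i\rangle \to 0$ for every $t \in \R$ and $T \in \B(H)$, hence $\Phi \circ \sigma_t = \Phi$. The extension to $\Phi \circ \sigma_\mu = \Phi$ then follows by first restricting $T$ to the strongly dense $\sigma$-norm-continuous subalgebra $\mathcal{A}$, on which $\sigma_\mu T$ is a genuine Bochner integral and therefore $\Phi(\sigma_\mu T) = \int \Phi(\sigma_t T)\,\rd \mu(t) = \Phi(T)$, and then extending via the preliminary fact that $\sigma_f T \in \mathcal{A}$ for every $f \in \rL^1(\R)$. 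Weak$^*$ closed convex combinations propagate the property to all of $\conv \mathcal{E}(X)$.

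The substantive direction $(i) \Rightarrow (ii)$ I would prove by Hahn--Banach separation applied to the weak$^*$ closed convex set $\conv \mathcal{E}(X)$: it suffices to show that for every self-adjoint $T \in \B(H)$ and every $\epsilon > 0$,
\[
\Psi(T) \leq \alpha_\epsilon := \sup\{\langle T\xi, \xi\rangle : \|\xi\| = 1, \; \inf_{\lambda \in \R} \|(X - \lambda)\xi\| \leq \epsilon\},
\]
since $\lim_{\epsilon \downarrow 0}\alpha_\epsilon = \sup_{\Phi \in \mathcal{E}(X)}\Phi(T)$, and the reverse inequality comes from applying the same argument to $-T$. Fix $\epsilon > 0$ and choose a smooth partition of unity $\{\chi_k\}_{k \in \Z}$ on $\R$ with $\sum_k \chi_k^2 = 1$, each $\chi_k$ supported in an interval of length $2\epsilon$ centered at some $\lambda_k$, satisfying $\|\chi_k'\|_\infty \leq C/\epsilon$ and uniformly bounded overlap. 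The smoothed diagonal expectation $\tilde E(T) := \sum_k \chi_k(X) T \chi_k(X)$ is a unital normal completely positive map commuting with every $\sigma_\mu$. The pointwise bound $\|(X - \lambda_k)\chi_k(X)\| \leq \epsilon$ shows that each normalized vector $\chi_k(X)\eta/\|\chi_k(X)\eta\|$ is an $\epsilon$-approximate eigenvector, so that $\langle \tilde E(T)\eta, \eta\rangle = \sum_k \langle T\chi_k(X)\eta,\chi_k(X)\eta\rangle \leq \alpha_\epsilon \sum_k \|\chi_k(X)\eta\|^2 = \alpha_\epsilon \|\eta\|^2$. Hence $\tilde E(T) \leq \alpha_\epsilon \cdot I$ and $\Psi(\tilde E(T)) \leq \alpha_\epsilon$, reducing the problem to the identity $\Psi(T) = \Psi(\tilde E(T))$.

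This last step is where condition $(i)$ enters decisively. Since $\tilde E$ commutes with every $\sigma_\mu$, invariance gives
\[
\Psi(T) - \Psi(\tilde E(T)) = \Psi\bigl((\id - \tilde E)(\sigma_{f_\eta} T)\bigr)
\]
for any probability density $f_\eta \in \rL^1(\R)^+$. Taking $f_\eta$ to be the Fej\'er kernel with $\widehat{f_\eta}$ supported in $[-\eta, \eta]$, the operator $Y := \sigma_{f_\eta} T$ has Arveson $\sigma$-spectrum contained in $[-\eta, \eta]$; applying Bernstein's inequality to the bounded $\B(H)$-valued function $t \mapsto \sigma_t Y$ then yields $\|[X, Y]\| \leq \eta \|T\|$, and the standard $C^1$ functional-calculus commutator estimate gives $\|[\chi_k(X), Y]\| \leq C\eta/\epsilon \cdot \|T\|$. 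Combining this with the algebraic identity $(\id - \tilde E)(Y) = \sum_k [Y, \chi_k(X)]\chi_k(X)$, the Parseval-type relation $\sum_k \|\chi_k(X)\xi\|^2 = \|\xi\|^2$, and a quasi-orthogonality argument exploiting the bounded overlap of the supports, one obtains $\|(\id - \tilde E)(Y)\| \leq C'\eta/\epsilon \cdot \|T\|$; letting $\eta \downarrow 0$ yields the desired identity $\Psi(T) = \Psi(\tilde E(T))$. The main obstacle will be precisely this operator-norm estimate: the naive $\ell^1$ sum $\sum_k \|[Y, \chi_k(X)]\chi_k(X)\xi\|$ is not summable, and one must use that the vectors $[Y, \chi_k(X)]\chi_k(X)\xi$ have spectral support in the enlargements $\supp\chi_k + [-\eta, \eta]$, still of bounded overlap, in order to run an $\ell^2$ Cauchy--Schwarz argument against the $\ell^2$-summable sequence $\{\|\chi_k(X)\xi\|\}$.
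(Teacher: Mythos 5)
Your $(\rm ii) \Rightarrow (\rm i)$ argument starts from exactly the right place --- the uniform functional-calculus estimate $\|(e^{\ri t X}-e^{\ri t \lambda})\xi\| \leq |t|\,\|(X-\lambda)\xi\|$, which is the same inequality the paper exploits --- but the promotion from $\Phi\circ\sigma_t=\Phi$ to $\Phi\circ\sigma_\mu=\Phi$ is where you have a genuine gap. Restricting to the norm-continuous subalgebra $\mathcal{A}$ only gives you $\Phi(\sigma_\mu T)=\Phi(T)$ for $T\in\mathcal{A}$, and the claim that this ``extends via $\sigma_f T\in\mathcal{A}$'' does not go through: $\Phi$ is not normal, so $\Phi(\sigma_{g_\delta}T)\not\to\Phi(T)$ along an approximate identity, and weak$^*$ approximation of $\mu$ by densities is useless for the same reason. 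This is precisely the subtlety the paper warns about right after the theorem statement. The fix is already implicit in your estimate: it gives $\sup_{|t|\leq T}\|\sigma_t^*\Phi_i-\Phi_i\|\to 0$ for the approximating vector states $\Phi_i$, hence (by truncating $\mu$ to a compact interval) $\|\sigma_\mu^*\Phi_i-\Phi_i\|\to 0$ for \emph{every} $\mu$, and then $|\Phi(\sigma_\mu T)-\Phi(T)|=\lim_i|\Phi_i(\sigma_\mu T)-\Phi_i(T)|\leq\lim_i\|\sigma_\mu^*\Phi_i-\Phi_i\|\,\|T\|=0$, using that each $\Phi_i$ is normal. This is exactly what the paper's Lemmas 3.2 and 3.3 package.

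For $(\rm i)\Rightarrow(\rm ii)$ your route is genuinely different from the paper's. The paper lifts a strongly $\sigma$-invariant approximating net of normal states to Hilbert--Schmidt operators, applies Powers--St{\o}rmer to obtain vectors $\xi_i$ in $H\otimes\overline H$ with $\|(X\otimes 1 - 1\otimes\overline X)\xi_i\|\to 0$, and then concludes by Hahn--Banach separation together with the operator inequality $T\otimes 1\leq\kappa|X\otimes 1-1\otimes\overline X|^2+\varepsilon$ (Lemma 3.4). You instead build a smoothed diagonal expectation $\tilde E(T)=\sum_k\chi_k(X)T\chi_k(X)$ commuting with $\sigma$, observe $\tilde E(T)\leq\alpha_\epsilon$, and try to show $\Psi(T)=\Psi(\tilde E(T))$ by spectrally localizing $T$ via a Fej\'er kernel and running a Cotlar--Stein/almost-orthogonality estimate on $\sum_k[Y,\chi_k(X)]\chi_k(X)$. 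The skeleton is sound: $\tilde E$ does commute with each $\sigma_\mu$, the approximate-eigenvector bound $\|(X-\lambda_k)\chi_k(X)\|\leq\epsilon$ does give $\tilde E(T)\leq\alpha_\epsilon$, and the Hahn--Banach reduction to $\Psi(T)\leq\sup_{\Phi\in\mathcal{E}(X)}\Phi(T)$ is correct (though you do not need a ``reverse inequality from $-T$''; the one-sided criterion suffices). The operator-norm estimate $\|(\id-\tilde E)(\sigma_{f_\eta}T)\|\leq C'\eta/\epsilon\,\|T\|$, which you yourself flag as the main obstacle, is plausible --- with a scaled smooth bump $\chi$ one gets $\|[\chi_k(X),Y]\|\lesssim(\eta/\epsilon)\|T\|$ via $[\chi_k(X),Y]=\tfrac{1}{2\pi}\int\widehat{\chi_k}(s)e^{\ri sX}(Y-\sigma_{-s}Y)\,\rd s$ and $\|Y-\sigma_{-s}Y\|\leq|s|\eta\|T\|$, and the enlarged spectral supports $\supp\chi_k+[-\eta,\eta]$ retain bounded overlap so almost orthogonality closes the sum --- but none of this is written down, and it is precisely here that the real work lives. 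So the two proofs reach the same conclusion by quite different mechanisms: the paper's argument trades the analytic localization you attempt for the tensor-square lift and a single clean operator inequality, which is considerably more robust; your proposal avoids the Hilbert--Schmidt lift entirely at the cost of a delicate Littlewood--Paley-type norm estimate that you have only sketched.
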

A state $\Psi$ satisfying condition $(\rm i)$ will be called \emph{strongly $\sigma$-invariant}. A strongly $\sigma$-invariant state is of course $\sigma$-invariant, but the converse is not true in general when $\Psi$ is not normal because the map $t \mapsto \sigma_t(T)$ is not necessarily norm continous so that we do \emph{not} have
$$ \Psi \left(\int_{t \in \R} \sigma_t(T) \rd \mu (t) \right)=\int_{t \in \R} \Psi(\sigma_t(T)) \rd \mu (t).$$
See the preliminary section for more details.

We will need several lemmas in order to prove Theorem \ref{thm state strong invariant}.

\begin{lem} \label{lem strongly invariant}
Let $E$ be a Banach space and let $\alpha : \R \curvearrowright E$ be a strongly continous one-parameter group of isometries. Let $A$ be the infinitesimal generator of $\alpha$. For any bounded net $(x_i)_{i \in I}$ in $E$, the following are equivalent:
\begin{itemize}
\item [$(\rm i)$] For all $T > 0$, $\lim_{i \to \infty} \sup_{t \in [-T,T]} \| \alpha_t(x_i)-x_i \| = 0$.
\item [$(\rm ii)$] For all $\mu \in S(\R)$, $\lim_{i } \| \alpha_\mu(x_i)-x_i \|=0$.
\item [$(\rm iii)$] There is a net $(y_i)_{i \in I}$ in the domain of $A$ such that $\lim_i \| y_i-x_i\|=0$ and $\lim_i \| Ay_i \|=0$.
\end{itemize}
In that case, we say that the net $(x_i)_{i \in I}$ is strongly $\alpha$-invariant.
\end{lem}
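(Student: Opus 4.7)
The plan is to prove the chain of implications (iii) $\Rightarrow$ (i) $\Rightarrow$ (ii) $\Rightarrow$ (iii), using nothing more than Banach-space calculus for strongly continuous one-parameter groups.

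For (iii) $\Rightarrow$ (i), I would use the identity $\alpha_t(y)-y = \int_0^t \alpha_s(Ay)\, \rd s$ valid for $y \in \dom(A)$, which immediately gives $\sup_{|t|\leq T} \|\alpha_t(y_i)-y_i\| \leq T \|Ay_i\| \to 0$; then three triangle-inequality terms transfer this bound from $y_i$ to $x_i$, since $\alpha_t$ is isometric.

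For (i) $\Rightarrow$ (ii), given $\mu \in S(\R)$, I would start from $\alpha_\mu(x_i)-x_i = \int_\R (\alpha_t(x_i)-x_i)\, \rd\mu(t)$ and split the integral at a threshold $|t|\leq T$ versus $|t|>T$. Letting $M$ be a uniform bound for $(x_i)$, for given $\varepsilon>0$ one first picks $T$ large enough that $\mu(\R\setminus[-T,T]) < \varepsilon/(4M)$ (so the tail integral is controlled by the uniform bound), and then uses (i) to make the remaining $[-T,T]$ part arbitrarily small for $i$ large enough.

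The main step, and the main obstacle, is (ii) $\Rightarrow$ (iii), where we actually have to manufacture elements in $\dom(A)$ with small image. My plan is to use the Ces\`aro averages $y_i^{(T)} = \frac{1}{T} \int_0^T \alpha_s(x_i)\, \rd s$, which lie in $\dom(A)$ and satisfy $A y_i^{(T)} = \tfrac{1}{T}(\alpha_T(x_i) - x_i)$ by the fundamental theorem of calculus along the flow; hence $\|A y_i^{(T)}\| \leq 2M/T$ uniformly in $i$. On the other hand, $y_i^{(T)} = \alpha_{\mu_T}(x_i)$ where $\mu_T = \frac{1}{T}\chi_{[0,T]}\, \rd s \in S(\R)$, so by (ii) we have $\|y_i^{(T)} - x_i\| \to 0$ as $i \to \infty$ for each fixed $T$. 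A diagonal argument (for each $\varepsilon>0$ choose $T > 2M/\varepsilon$ and then pass to $i$ large enough depending on $T$) produces a refined net $(y_i)$ simultaneously approximating $(x_i)$ and satisfying $\|Ay_i\| \to 0$.

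The subtle point is that we cannot simply invoke a smooth approximate identity $\alpha_{f_n}(x_i)$, because the naive estimate $A\alpha_{f_n}(x) = -\alpha_{f_n'}(x)$ only gives $\|A\alpha_{f_n}(x_i)\| \leq \|f_n'\|_1 M$, which blows up along any approximate identity. The Ces\`aro trick works precisely because the two endpoint terms already contain the needed cancellation, bypassing the need for differentiability of the averaging kernel.
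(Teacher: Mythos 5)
Your chain (iii) $\Rightarrow$ (i) $\Rightarrow$ (ii) $\Rightarrow$ (iii) is correct, and the first two implications match the paper (the integral formula $\alpha_t(y)-y=\int_0^t \alpha_s(Ay)\,\rd s$ is exactly what the paper's ``$\|\alpha_t(y)-y\|\le\kappa\|Ay\|$'' amounts to, and (i) $\Rightarrow$ (ii) is declared obvious there). Where you genuinely diverge is the hard step (ii) $\Rightarrow$ (iii). The paper does \emph{not} use a shrinking approximate identity, contrary to what your last paragraph suggests: it fixes once and for all an $f\in \rL^1(\R)^+$ with $\widehat f$ supported in $[-1,1]$, sets $y_i=\alpha_f(x_i)$, and then proves $\|Ay_i\|\to 0$ via a band-limitation (Arveson-spectrum) estimate --- if $\widehat g$ is supported in $[-\varepsilon,\varepsilon]$ then $\|A\alpha_g(z)\|\le\varepsilon\|z\|$, and similarly $\|A(y_i-\alpha_g(y_i))\|\lesssim\|y_i-\alpha_g(y_i)\|$ since $\widehat{f}(1-\widehat g)$ is still supported in $[-1,1]$. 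So the cancellation mechanism in the paper is spectral rather than the endpoint cancellation you invoke. Your Ces\`aro route --- $y_i^{(T)}=\alpha_{\mu_T}(x_i)$ with $\mu_T=T^{-1}\mathbf 1_{[0,T]}$, $Ay_i^{(T)}=T^{-1}(\alpha_T(x_i)-x_i)$ --- is a valid and arguably more elementary alternative, since it needs no Fourier theory at all. The trade-off is that your $y_i^{(T)}$ depends on an auxiliary parameter $T$, so you need a diagonalization over a general directed set $I$ to produce a single net $(y_i)_{i\in I}$; this is standard but not entirely trivial (one has to handle, e.g., indices $i$ that eventually lie in every tail $\{\|y_\cdot^{(n)}-x_\cdot\|<1/n\}$, for which one notes $x_i\in\dom(A)$ with $Ax_i=0$ by closedness of $A$ and takes $y_i=x_i$). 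The paper's fixed-kernel construction sidesteps the diagonal entirely, which is its main advantage; your version buys elementariness at the cost of that extra bookkeeping. Both are correct.
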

\begin{proof}
$(\rm i) \Rightarrow (\rm ii)$ is obvious. Now, assume $(\rm ii)$ and let us show $(\rm iii)$. Take any $f \in \rL^1(\R)^+$ with $\|f\|_1=1$ whith Fourier transform $\widehat{f}$ supported on $[-1,1]$ and let $y_i=\alpha_f(x_i)$ for all $i \in  I$. Then $y_i$ is in the domain of $A$ and we have $\lim_i \|y_i-x_i \|=0$ by assumption. Let us show that $\lim_i \|Ay_i\|=0$. For $\varepsilon \in ]0,1[$, choose $g \in \rL^1(\R)^+, \: \|g\|_1=1$ with $\widehat{g}$ supported on $[-\varepsilon, \varepsilon]$. Then for all $i \in I$, we have $\|A\alpha_{g}(y_i)\| \leq \varepsilon \|y_i\|\leq M\varepsilon$ where $M=\sup_i \|y_i\|$. Moreover, we have $\limsup_i \| y_i-\alpha_g(y_i) \| \leq \limsup_i \| x_i-\alpha_{g}(x_i) \|=0$. Therefore we have $\limsup_i \| Ay_i-A\alpha_g(y_i)\| \leq \|y_i-\alpha_{g}(y_i)\|=0$. Since $\| A\alpha_{g}(y_i)\| \leq \varepsilon$, we conclude that $\limsup_i \|Ay_i\| \leq M\varepsilon$. This holds for all $\varepsilon > 0$, thus $\lim_i \|Ay_i\|=0$.
Finally, let us show that $(\rm iii) \Rightarrow (\rm i)$. For any $T > 0$, there exists a constant $\kappa > 0$ such that $|e^{\ri t x}-1| \leq \kappa |x|$ for all $t \in [-T,T]$. Hence, for all $y$ in the domain of $A$, we have $ \|\alpha_t(y)-y \| \leq \kappa \|A y\|$ and we conclude easily that $(\rm i)$ holds.
\end{proof}

\begin{lem} \label{lem state net}
Let $H$ be a Hilbert space and let $\sigma : \R \curvearrowright \B(H)$ be a flow. Then a state $\Psi \in \B(H)^*$ is strongly $\sigma$-invariant if and only if there exists a strongly $\sigma$-invariant net $(\Psi_i)_{i \in I}$ of normal states on $\B(H)$ such that $\lim_i \Psi_i=\Psi$ in the weak$^*$ topology.
\end{lem}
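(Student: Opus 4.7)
The plan is to prove the two implications separately; the direction ($\Leftarrow$) is routine and the content lies in ($\Rightarrow$).

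For ($\Leftarrow$), suppose $(\Psi_i)$ is a strongly $\sigma$-invariant net of normal states with $\Psi_i \to \Psi$ weak$^*$. Applying condition~(ii) of Lemma~\ref{lem strongly invariant} to the induced action $\sigma_* : \R \curvearrowright \B(H)_*$, one has for every $\mu \in S(\R)$ and every $T \in \B(H)$,
\[
\Psi(\sigma_\mu(T)) - \Psi(T) = \lim_i (\Psi_i \circ \sigma_\mu - \Psi_i)(T),
\]
and the right-hand side vanishes because $|(\Psi_i \circ \sigma_\mu - \Psi_i)(T)| \leq \|T\|\,\|\Psi_i \circ \sigma_\mu - \Psi_i\| \to 0$.

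For the nontrivial direction ($\Rightarrow$), my plan is to combine two standard approximations. First, the weak$^*$ density of the normal states inside the full state space of $\B(H)$: for any self-adjoint $T \in \B(H)$ one has $\sup\{\omega(T) : \omega \text{ normal state}\} = \max \spec(T) \geq \Psi(T)$, and a Hahn--Banach separation argument then yields a net $(\omega_\alpha)$ of normal states with $\omega_\alpha \to \Psi$ weak$^*$. Second, a F\o{}lner smoothing on $\R$: set $f_m := m^{-1} \mathbf{1}_{[-m/2,\, m/2]} \in \rL^1(\R)^+$, so that $\|\delta_t \ast f_m - f_m\|_1 \leq 2|t|/m$ for all $t \in \R$.

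Combining these, I will define $\Psi_{\alpha, m} := \omega_\alpha \circ \sigma_{f_m} = (\sigma_*)_{f_m}(\omega_\alpha)$, which is a normal state since $(\sigma_*)_{f_m}$ preserves $\B(H)_*$. The identity $\sigma_{f_m} \circ \sigma_t = \sigma_{\delta_t \ast f_m}$ yields
\[
\|\Psi_{\alpha, m} \circ \sigma_t - \Psi_{\alpha, m}\| \leq \|\delta_t \ast f_m - f_m\|_1 \leq \frac{2|t|}{m}
\]
uniformly in $\alpha$, while the strong invariance of $\Psi$ gives $\Psi \circ \sigma_{f_m} = \Psi$, so $\Psi_{\alpha, m}(T) = \omega_\alpha(\sigma_{f_m}(T)) \to \Psi(T)$ as $\alpha \to \infty$, for each fixed $m$ and each $T \in \B(H)$. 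A diagonal extraction will then conclude: indexing by the directed set $I$ of triples $(F, \varepsilon, N)$ with $F \subset \B(H)$ finite, $\varepsilon > 0$ and $N > 0$ (ordered in the natural way), for each $i = (F, \varepsilon, N)$ I first pick $m > 2N/\varepsilon$ (which makes $\sup_{|t|\leq N}\|\Psi_{\alpha, m}\circ\sigma_t - \Psi_{\alpha, m}\| < \varepsilon$ uniformly in $\alpha$) and then $\alpha$ large enough that $|\Psi_{\alpha, m}(S) - \Psi(S)| < \varepsilon$ for every $S \in F$. The resulting net $\Psi_i := \Psi_{\alpha, m}$ is a net of normal states converging weak$^*$ to $\Psi$ and satisfying condition~(i) of Lemma~\ref{lem strongly invariant}, hence strongly $\sigma$-invariant. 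No step here poses a serious obstacle; the one subtlety worth flagging is that the flow $\sigma$ is not norm-continuous on the whole of $\B(H)^*$, so the strong invariance of the net must be read inside the predual $\B(H)_*$, which is fine since each $\Psi_i$ is normal.
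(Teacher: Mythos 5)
Your proof is correct, and for the nontrivial (only if) direction it takes a genuinely different route from the paper's. The paper argues by separation: for each triple $(K, F, \varepsilon)$ with $K \subset \B(H)$ and $F \subset S(\R)$ finite, it considers the convex set $W \subset (\B(H)_*)^F$ of tuples $(\psi - \psi\circ\sigma_\mu)_{\mu\in F}$ as $\psi$ ranges over normal states that $\varepsilon$-approximate $\Psi$ on $K$; weak$^*$ density of the normal states together with $\Psi\circ\sigma_\mu = \Psi$ puts $0$ in the weak$^*$ closure of $W$, hence in its weak closure in $(\B(H)_*)^F$, hence by Mazur's theorem in its norm closure, which directly produces $\Psi_i$ and verifies condition $(\rm ii)$ of Lemma~\ref{lem strongly invariant}. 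You instead build the net by hand: take any weak$^*$-approximating net $(\omega_\alpha)$ of normal states, convolve with a F\o lner sequence $f_m$ to get $\Psi_{\alpha,m} = (\sigma_*)_{f_m}(\omega_\alpha)$ satisfying $\|\Psi_{\alpha,m}\circ\sigma_t - \Psi_{\alpha,m}\| \le 2|t|/m$ uniformly in $\alpha$, use the strong invariance relation $\Psi\circ\sigma_{f_m} = \Psi$ to see that the smoothing does not move the weak$^*$ limit, and finish by a diagonal net verifying condition $(\rm i)$ of the same lemma. The two proofs trade one use of Hahn--Banach for another: the paper uses Mazur's theorem to pass from weak to norm approximation, while you only need Hahn--Banach for the standard weak$^*$ density of normal states (which the paper also tacitly uses) and replace the Mazur step with the explicit F\o lner smoothing. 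Your version is more constructive and gives a quantitative rate of approximate invariance; the paper's is shorter, dispensing with the auxiliary parameter $m$. Both are complete and both reduce, as they should, to checking one of the equivalent conditions in Lemma~\ref{lem strongly invariant} applied to the predual flow $\sigma_*$.
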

\begin{proof} The if direction follows directly from item $(\rm ii)$ in Lemma \ref{lem strongly invariant}. Let us prove the only if direction. Suppose that $\Psi$ is strongly $\sigma$-invariant. Let $I$ be the directed set of all triples $(K,F,\varepsilon)$ where $K \subset \B(H)$ and $F \subset S(\R)$ are finite subsets and $\varepsilon > 0$ is a positive real number. Fix $i=(K,F,\varepsilon) \in I$. Define a subset of $(\B(H)^*)^F$ by
$$ W=\{ ( \psi - \psi \circ \sigma_\mu )_{\mu \in F} \in (\B(H)_*)^F \mid \psi \in \B(H)_*^+, \: \psi(1)=1 \text{ and } \forall T \in K, \: |\psi(T)-\Psi(T)| \leq \varepsilon \}.$$
By assumption, we have that $0=(\Psi-\Psi \circ \sigma_\mu)_{\mu \in F}$ belongs the the weak$^*$ closure of $W$. Hence $0$ belongs to the weak closure of $W$ in $(\B(H)_*)^F$. Since $W$ is convex, then $0$ also belongs to the norm closure of $W$ by the Hahn-Banach theorem. This means that we can find a state $\Psi_i \in \B(H)_*$ such that $\| \Psi_i-\Psi_i \circ \sigma_\mu \| \leq \varepsilon$ for all $\mu \in F$ and $|\Psi_i(T)-\Psi(T)| \leq \varepsilon$ for all $T \in K$. Therefore, we have a constructed a net of normal states $(\Psi_i)_{i \in I}$ which is strongly $\sigma$-invariant and such that $\lim \Psi_i=\Psi$ in the weak$^*$ topology.
\end{proof}

\begin{lem} \label{lem inequality}
Let $H$ be a Hilbert space and let $X$ be a self-adjoint operator on $H$. For any bounded self-adjoint operator $T \in \B(H)$ the following are equivalent:
\begin{enumerate}
\item [$(\rm i)$] $\Phi(T) \leq 0$ for all $\Phi \in \mathcal{E}(X)$.
\item [$(\rm ii)$] For every $\varepsilon > 0$, there exists a constant $\kappa > 0$ such that for every $\lambda \in \R$, we have
$$T \leq \kappa |X - \lambda|^2 +\varepsilon.$$
\item [$(\rm iii)$] For every $\varepsilon > 0$, there exists a constant $\kappa > 0$ such that for any self-adjoint operator $Y$ on any Hilbert space $K$, we have
$$T \otimes 1 \leq \kappa |X \otimes 1 - 1 \otimes Y|^2 +\varepsilon$$
where the operators $T \otimes 1$, $X \otimes 1$ and $1 \otimes Y$ act on $H \otimes K$.
\end{enumerate}
\end{lem}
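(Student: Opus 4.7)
The plan is to prove the cycle $(\text{iii}) \Rightarrow (\text{ii}) \Rightarrow (\text{i}) \Rightarrow (\text{iii})$, since the first implication is trivial (take $K = \C$ and $Y = \lambda$), and the equivalence $(\text{ii}) \Leftrightarrow (\text{iii})$ will follow from the spectral theorem for a single self-adjoint operator. The only substantive piece is the equivalence $(\text{i}) \Leftrightarrow (\text{ii})$, which is a direct application of the definition of $\mathcal{E}(X)$ together with a Hahn--Banach style argument dressed up as a weak$^*$-compactness argument.

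For $(\text{ii}) \Rightarrow (\text{i})$, let $\Phi \in \mathcal{E}(X)$ and choose a representing net $(\xi_i)_{i\in I}$ of unit vectors in $\dom(X)$ with $\langle \,\cdot\, \xi_i, \xi_i\rangle \to \Phi$ weakly$^*$ and $\inf_\lambda \|(X-\lambda)\xi_i\| \to 0$. Given $\varepsilon > 0$, pick $\kappa$ as in (ii) and $\lambda_i \in \R$ with $\|(X-\lambda_i)\xi_i\|^2$ nearly optimal. Applying the form inequality $T \le \kappa |X-\lambda_i|^2 + \varepsilon$ at $\xi_i$ and passing to the limit gives $\Phi(T) \le \varepsilon$; since $\varepsilon$ is arbitrary, $\Phi(T) \le 0$.

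For $(\text{i}) \Rightarrow (\text{ii})$, I argue by contraposition. If (ii) fails, there is some $\varepsilon_0 > 0$ such that for every $n \in \N$ we can find $\lambda_n \in \R$ and a unit vector $\xi_n \in \dom(X)$ violating $T \le n|X-\lambda_n|^2 + \varepsilon_0$, i.e.\ $\langle T\xi_n, \xi_n\rangle > n\|(X-\lambda_n)\xi_n\|^2 + \varepsilon_0$. Since $\langle T\xi_n, \xi_n\rangle \le \|T\|$, this forces $\|(X-\lambda_n)\xi_n\|^2 \le (\|T\|-\varepsilon_0)/n \to 0$, hence $\inf_\lambda \|(X-\lambda)\xi_n\| \to 0$. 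By weak$^*$-compactness of the state space of $\B(H)$, the net $(\langle\, \cdot\, \xi_n,\xi_n\rangle)$ has a cluster point $\Phi$, which by the characterisation $\mathcal{E}(X) = \bigcap_{\varepsilon > 0} \overline{\{\langle\,\cdot\,\xi,\xi\rangle : \inf_\lambda \|(X-\lambda)\xi\| \le \varepsilon\}}^{\mathrm{w}^*}$ lies in $\mathcal{E}(X)$, and satisfies $\Phi(T) \ge \varepsilon_0 > 0$, contradicting (i).

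Finally, for $(\text{ii}) \Rightarrow (\text{iii})$, I use the spectral theorem for $Y$: write $K = \int^\oplus K_\mu \, d\nu(\mu)$ with $Y$ acting by multiplication by $\mu$ on each fibre. Then $H \otimes K = \int^\oplus (H \otimes K_\mu)\, d\nu(\mu)$, the operators $X\otimes 1$ and $1 \otimes Y$ are decomposable, and $X \otimes 1 - 1 \otimes Y$ acts fibrewise as $(X-\mu) \otimes 1_{K_\mu}$. Applying the scalar inequality $T \le \kappa |X-\mu|^2 + \varepsilon$ supplied by (ii) on each fibre and integrating yields the global form inequality. The only point requiring care is that the inequalities in (ii) and (iii) are to be read in the form sense (since $X$, $Y$ may be unbounded), and that the cores of $X \otimes 1 - 1 \otimes Y$ behave well under the direct integral; this is standard and should not cause real trouble. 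I expect the bookkeeping in the direct-integral step to be the most delicate part, though conceptually routine.
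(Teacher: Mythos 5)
Your proposal is correct and follows essentially the same route as the paper: (ii)$\Leftrightarrow$(i) by evaluating the form inequality on representing vectors in one direction and by a contraposition plus weak$^*$-cluster-point argument in the other, and (ii)$\Leftrightarrow$(iii) via the spectral theorem for $Y$ (the paper simply cites \cite[Lemma 4.1]{HMV16} for this step, whereas you spell out the direct-integral argument, but the content is the same).
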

\begin{proof}
$(\rm i) \Rightarrow (\rm ii)$. Suppose that $\Phi(T) \leq 0$ for all $\Phi \in \mathcal{E}(X)$. Take $\varepsilon > 0$. Let us show that there exists a constant $\kappa > 0$ such that $T \leq \kappa |X-\lambda|^2+\varepsilon$ for every $\lambda \in \R$. If not, then for every $n \in \N$, we can find a unit vector $\xi_n$ in the domain of $X$ and some $\lambda_n \in \R$ such that $\langle T \xi_n, \xi_n \rangle \geq n \|(X-\lambda_n)\xi_n \|^2 + \varepsilon$. In particular, since $T$ is bounded, we have $\lim_n \|(X-\lambda_n) \xi_n \|=0$. Take $\omega \in \beta \N \setminus \N$ and define a state $\Phi$ on $\B(H)$ by the weak$^*$-limit $\Phi=\lim_{n \to \omega} \langle \bdot \xi_n , \xi_n \rangle$. Then we have $\Phi \in \mathcal{E}(X)$ and $\Phi(T) \geq \varepsilon$ but this is not possible by assumption. Hence there must exist some $\kappa > 0$ such that $T \leq \kappa |X- \lambda|^2 + \varepsilon$.

$(\rm ii) \Rightarrow (\rm i)$. Fix $\varepsilon > 0$ and take $\kappa > 0$ as in $(\rm ii)$. Then for every unit vector $\xi$ in the domain of $X$, we have $\langle T \xi, \xi \rangle \leq \kappa \inf_{\lambda \in \R} \| (X-\lambda) \xi \|^2+\varepsilon$. This implies that $\Phi(T) \leq \varepsilon$ for every $\Phi \in \mathcal{E}(X)$. Since this holds for every $\varepsilon > 0$, we conclude that $\Phi(T) \leq 0$ for all $\Phi \in \mathcal{E}(X)$.

Finally, the implication $(\rm iii) \Rightarrow (\rm ii)$ is obtained by taking $Y=\lambda$ on $K=\C$, while the implication $(\rm ii) \Rightarrow (\rm iii)$ follows from the spectral theorem applied to $Y$. See, for example, the proof of \cite[Lemma 4.1]{HMV16}.
\end{proof}

\begin{proof}[Proof of Theorem \ref{thm state strong invariant}]
$(\rm i) \Rightarrow (\rm ii)$. Since $\Psi$ is strongly $\sigma$-invariant, then, by Lemma \ref{lem state net}, we can find a strongly $\sigma$-invariant net $(\Psi_i)_{i \in I}$ of normal states on $\B(H)$ such that $\lim_i \Psi_i=\Psi$ in the weak$^*$ topology. Let $\Psi_i^{1/2} \in \mathrm{HS}(H)$ be the Hilbert-Schmidt operator associated to $\Psi_i$ for every $i \in I$. Note that the flow $\sigma$ restricts to a one-parameter unitary group $U=(U_t)_{t \in \R}$ on the Hilbert space $\mathrm{HS}(H)$. Under the identification $\mathrm{HS}(H) \cong H \otimes \overline{H}$, we have the formula $U_t = e^{\ri t X} \otimes \overline{e^{\ri t X}}=e^{\ri t X} \otimes e^{-\ri t \overline{X}}$ and the infinitesimal generator of $(U_t)_{t \in \R}$ is thus given by $A=X \otimes 1 - 1 \otimes \overline{X}$. Since $(\Psi_i)_{i \in I}$ is strongly $\sigma$-invariant, then by the Powers-St\o rmer inequality, the net $(\Psi^{1/2}_i)_{i \in I}$ is also strongly $U$-invariant. Therefore, by Lemma \ref{lem strongly invariant}, we can find a net $(\xi_i)_{i \in I}$ in the domain of $A$ such that $\lim_i\| \Psi_i^{1/2}-\xi_i \|=0$ and $\lim_i \|A \xi_i \|=0$.

Now, suppose by contradiction that $\Psi \notin \conv \mathcal{E}(X)$. Then by the Hahn-Banach separation theorem, we can find a bounded self-adjoint operator $T \in \B(H)$ such that $\Psi(T) > 0$ and $\Phi(T) \leq 0$ for all $\Phi \in \mathcal{E}(X)$. Take $0 < \varepsilon < \Psi(T)$. By Lemma \ref{lem inequality}, we can find a constant $\kappa > 0$ such that $T \otimes 1 \leq \kappa |A|^2 + \varepsilon$ as operators on $H \otimes \overline{H}$. By applying $\langle \bdot \xi_i , \xi_i \rangle$ to this inequality, we get $\langle (T \otimes 1) \xi_i, \xi_i \rangle \leq \kappa \| A \xi_i \|^2+ \varepsilon\|\xi_i\|^2$ for all $i \in I$. Hence, we get
\begin{align*}
\Psi(T)	&= \lim_i \langle (T \otimes 1)\Psi_i^{1/2},\Psi_i^{1/2} \rangle \\
			&= \lim_i \langle (T \otimes 1)\xi_i,\xi_i \rangle \\
			&\leq  \lim_i \left( \kappa \| A \xi_i \|^2+\varepsilon \|\xi_i\|^2 \right) \\
			&=0+\varepsilon.
\end{align*}
From this contradiction, we conclude that $\Psi \in \conv \mathcal{E}(X)$.

$(\rm ii) \Rightarrow (\rm i)$. Note that the set of all strongly $\sigma$-invariant states is convex and closed in the weak$^*$ topology. Hence it is enough to show that every approximate eigenstate $\Phi \in \mathcal{E}(X)$ is strongly $\sigma$-invariant. Let $(\xi_i)_{i \in I}$ be a net of unit vectors in $\dom(X)$ such that $\Phi = \lim_i \langle \bdot \xi_i, \xi_i \rangle$ and $\lim_i \inf_{\lambda \in \R} \| (X-\lambda) \xi_i \|=0$. Fix $T > 0$. Then we can find a constant $\kappa > 0$ such that $|e^{\ri t x} - 1| \leq \kappa |x|$ for all $x \in \R$ and all $t \in [-T,T]$. By applying this inequality to the operator $X-\lambda$, we get $$ \forall \lambda \in \R, \; \forall t \in [-T,T], \; \forall \xi \in \dom(X), \quad \|(e^{\ri t X}-e^{\ri t \lambda})\xi\| \leq \kappa \|(X-\lambda)\xi\|.$$  Let $\Phi_i=\langle \bdot \xi_i, \xi_i \rangle$. Then, for all $t \in [-T,T]$ and all $\lambda \in \R$, we have
\begin{align*}
\| \sigma_t(\Phi_i)-\Phi_i \| &=\| \langle \bdot e^{\ri t X}\xi_i,e^{\ri t X}\xi_i \rangle -  \langle \bdot e^{\ri t \lambda}\xi_i,e^{\ri t\lambda}\xi_i \rangle  \| \\
 &\leq 2\|(e^{\ri t X}-e^{\ri t \lambda})\xi_i\| \\
 &\leq 2\kappa \|(X-\lambda)\xi_i\|.
 \end{align*}
Since $\lim_i \inf_{\lambda \in \R} \| (X-\lambda) \xi_i \|=0$ we get
$\lim_i \sup_{t \in [-T,T]} \| \sigma_t(\Phi_i)-\Phi_i \| = 0$. This shows that the net $(\Phi_i)_{i \in I}$ is strongly $\sigma$-invariant. We conclude that $\Phi=\lim_i \Phi_i$ is strongly $\sigma$-invariant by Lemma \ref{lem state net}.
\end{proof}

The definition of approximate eigenstates is highly sensitive to the \emph{uniform} topology of $\R$. We clarify this issue and explain the notion of \emph{asymptotic eigenvalue} of an approximate eigenstate.

\begin{lem} \label{lem function eigenstate}
Let $X$ be a self-adjoint operator on a Hilbert space $X$ and suppose we have a net $(\lambda_i)_{i \in I}$ in $\R$ and a bounded net $(\xi_i)_{i \in I}$ in the domain of $X$ such that $\lim_i \| (X-\lambda_i)\xi_i \|=0$. Then we have:
\begin{itemize}
\item [$(\rm i)$] $\lim_i \| f(X-\lambda_i) \xi_i \|=0$ for every bounded measurable function $f:\R \rightarrow \C$ such that $f$ is continuous at $0$ and $f(0)=0$.
\item [$(\rm ii)$] $\lim_i \| f(X)\xi_i-f(\lambda_i)\xi_i \|=0$ for every bounded uniformly continuous function $f : \R \rightarrow \C$.
\end{itemize}
\end{lem}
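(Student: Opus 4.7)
The plan is to reduce both statements to a single spectral-measure estimate applied to the self-adjoint operator $Y_i = X - \lambda_i$. The hypothesis then reads $\|Y_i \xi_i\| \to 0$, and denoting by $\mu_i$ the scalar spectral measure of $Y_i$ associated with $\xi_i$ (so that $\mu_i(\R) = \|\xi_i\|^2$ and $\int t^2 \rd \mu_i(t) = \|Y_i \xi_i\|^2$), the quantities to control are of the form $\int |g(t)|^2 \rd \mu_i(t)$ for suitable bounded measurable functions $g$.

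For (i), I would fix $\varepsilon > 0$, use continuity of $f$ at $0$ together with $f(0) = 0$ to pick $\delta > 0$ with $|f(t)| \leq \varepsilon$ on $\{|t| \leq \delta\}$, and split the integral over $\{|t| \leq \delta\}$ and $\{|t| > \delta\}$. The inner part contributes at most $\varepsilon^2 \|\xi_i\|^2$. For the outer part, I would use the pointwise estimate $|f(t)|^2 \mathbf{1}_{\{|t| > \delta\}}(t) \leq \|f\|_\infty^2 \, t^2 / \delta^2$, which gives a bound by $(\|f\|_\infty/\delta)^2 \|Y_i \xi_i\|^2 \to 0$. Passing to the limsup and then letting $\varepsilon \to 0$ yields $\lim_i \|f(Y_i)\xi_i\| = 0$.

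For (ii), I would translate and reduce to an argument of exactly the same shape. Writing $f(X)\xi_i - f(\lambda_i)\xi_i = h_i(Y_i)\xi_i$ with $h_i(s) = f(s + \lambda_i) - f(\lambda_i)$, the task becomes $\lim_i \|h_i(Y_i)\xi_i\| = 0$. One cannot invoke (i) directly, since the function varies with $i$, but uniform continuity of $f$ supplies a single modulus of continuity at $0$ valid for every $h_i$: for any $\varepsilon > 0$, there is $\delta > 0$ such that $|h_i(s)| \leq \varepsilon$ whenever $|s| \leq \delta$, independently of $i$, together with the uniform bound $\|h_i\|_\infty \leq 2\|f\|_\infty$. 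The same split-the-spectrum computation then carries through verbatim.

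The only mildly delicate point is exactly this uniformity, which is why uniform continuity (rather than mere pointwise continuity) is assumed in (ii); beyond that, I do not anticipate any real obstacle, as both parts are essentially just the spectral theorem combined with a Chebyshev-type estimate $\mu_i(\{|t| > \delta\}) \leq \delta^{-2} \|Y_i \xi_i\|^2$.
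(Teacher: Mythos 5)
Your argument is correct and is essentially the same as the paper's: both isolate the behavior of $f$ near $0$ using the $\varepsilon$-$\delta$ from (uniform) continuity and control the tail by comparison with $|x|/\delta$, applied through the functional calculus of $Y_i = X - \lambda_i$. The paper packages this as the single pointwise inequality $|f(x)| \leq \tfrac{1}{\delta}\|f\|_\infty |x| + \varepsilon$ applied directly to the operator, while you split the scalar spectral measure into $\{|t|\le\delta\}$ and $\{|t|>\delta\}$ with a Chebyshev bound, but these are two phrasings of the same estimate.
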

\begin{proof}
$(\rm i)$. Take $\varepsilon > 0$. Then there exists $\delta > 0$ such that $|f(x)| \leq \varepsilon$ for all $x \in [-\delta, \delta]$. This implies that $|f(x)| \leq \frac{1}{\delta}\|f\|_\infty |x| + \varepsilon$ for all $x \in \R$. By applying this inequality to $X-\lambda_i$, we get $\|f(X-\lambda_i) \xi_i \| \leq \frac{1}{\delta} \|f \|_\infty \| (X-\lambda_i) \xi_i \| + \varepsilon \|\xi_i\|$ for all $i \in I$. This shows that $\limsup_i \|f(X-\lambda_i) \xi_i \| \leq \varepsilon \sup_i \| \xi_i\|$. Since this holds for every $\varepsilon > 0$, we conclude that $\lim_i \| f(X-\lambda_i) \xi_i \|=0$.
$(\rm ii)$. Take $\varepsilon > 0$. Since $f$ is uniformly continuous, there exists $\delta > 0$ such that $|f(x)-f(y)| \leq \varepsilon$ for all $x,y \in \R$ with $|x-y| \leq \delta$. This implies that $|f(x)-f(\lambda_i)| \leq \frac{1}{\delta}\|f\|_\infty |x-\lambda_i|+\varepsilon$ for all $x \in \R$. Then, similarly to item $(\rm i)$, we conclude by applying this inequality to $X$.
\end{proof}

Let $C_u(\R)$ be the set all bounded uniformly continuous functions on $\R$. We define the compactification $\overline{\R}^u$ of $\R$  as the Gelfand spectrum of the commutative $C^*$-algebra $C_u(\R)$ (this is called the \emph{Samuel compactification} in the litterature). If $A \subset \R$ is a closed subset of $\R$, we denote by $\overline{A}^u$ its closure in $\overline{\R}^u$.

\begin{prop} \label{prop asymptotic eigenvalue}
Let $X$ be a self-adjoint operator on a Hilbert space $H$. Let $\Phi \in \B(H)^*$ be a state. Then $\Phi \in \mathcal{E}(X)$ if and only if there exists $\omega \in \overline{\spec(X)}^u$, a net $(\lambda_i)_{i \in I}$ in $\spec(X)$ and a net $(\xi_i)_{i \in I}$ in the domain of $X$ such that:
\begin{itemize}
\item [$(\rm i)$] $\Phi = \lim_i \langle \bdot \xi_i, \xi_i \rangle$.
\item [$(\rm ii)$] $\lambda_i \to \omega$ when $i \to \infty$.
\item [$(\rm iii)$] $\lim_i \| (X-\lambda_i) \xi_i \|=0$.
\end{itemize}
In that case, $\omega$ is unique and is characterized by $\Phi(f(X))=f(\omega)$ for all $f \in C_u(\R)$.
\end{prop}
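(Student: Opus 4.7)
The plan is to prove the two directions separately, with the reverse implication being essentially tautological and the forward direction containing the substantive work. For the ``if'' direction, given the nets $(\xi_i)$ and $(\lambda_i)$ with $\lim_i \|(X-\lambda_i)\xi_i\|=0$, one simply observes that $\inf_{\lambda \in \R} \|(X-\lambda)\xi_i\| \le \|(X-\lambda_i)\xi_i\| \to 0$, so the same net $(\xi_i)$ already witnesses $\Phi \in \mathcal{E}(X)$.

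For the ``only if'' direction, I start from the definition: a net $(\xi_i)_{i \in I}$ in $\dom(X)$ with $\Phi = \lim_i \langle \bdot\, \xi_i, \xi_i\rangle$ in the weak$^*$ topology and $\lim_i \inf_{\lambda \in \R}\|(X-\lambda)\xi_i\| = 0$. Since $\Phi$ is a state, $\|\xi_i\|^2 \to 1$, so after discarding a final segment I may assume $1/2 \le \|\xi_i\| \le 2$ for all $i$. The first task is to replace the infimum by an actual spectral value, realized up to $o(1)$. Choose $\lambda_i' \in \R$ with $\|(X-\lambda_i')\xi_i\| \le \inf_{\lambda}\|(X-\lambda)\xi_i\| + 2^{-|i|}$; by the spectral theorem one has $\|(X-\lambda_i')\xi_i\| \ge d(\lambda_i', \spec(X))\,\|\xi_i\|$, so $d(\lambda_i', \spec(X)) \to 0$. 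Pick any $\lambda_i \in \spec(X)$ with $|\lambda_i - \lambda_i'| \to 0$; then the triangle inequality gives $\|(X-\lambda_i)\xi_i\| \to 0$.

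The second task is to produce the limit $\omega$. Since $\overline{\R}^u$ is compact, the net $(\lambda_i)$ admits a convergent subnet with limit some $\omega$, which automatically lies in $\overline{\spec(X)}^u$ because $\lambda_i \in \spec(X)$. Passing to this subnet preserves both the weak$^*$ limit $\Phi = \lim \langle \bdot\, \xi_i, \xi_i\rangle$ and the vanishing $\|(X-\lambda_i)\xi_i\| \to 0$, so conditions (i), (ii), (iii) all hold. To verify the characterization, fix $f \in C_u(\R)$; by Lemma \ref{lem function eigenstate}$(\rm ii)$ applied to the bounded net $(\xi_i)$, one has $\|f(X)\xi_i - f(\lambda_i)\xi_i\| \to 0$, and therefore
$$ \Phi(f(X)) = \lim_i \langle f(X)\xi_i, \xi_i\rangle = \lim_i f(\lambda_i) \|\xi_i\|^2 = f(\omega), $$
using $\|\xi_i\|^2 \to 1$ and the fact that $f$ extends continuously to $\overline{\R}^u$ by definition of the Samuel compactification. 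Uniqueness of $\omega$ is then immediate: since $C_u(\R)$ is the Gelfand algebra of $\overline{\R}^u$, its elements separate points, so the values $\{\Phi(f(X)) : f \in C_u(\R)\}$ uniquely determine $\omega$.

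The main obstacle, I expect, is the first task — upgrading approximately-minimizing reals $\lambda_i'$ to genuine spectral values $\lambda_i \in \spec(X)$ while preserving $\|(X-\lambda_i)\xi_i\| \to 0$. This hinges on the spectral-gap estimate $\|(X-\lambda)\xi\| \ge d(\lambda, \spec(X))\,\|\xi\|$ together with the lower bound $\|\xi_i\| \ge 1/2$; without the latter the replacement argument would fail. Everything else is a matter of compactness in $\overline{\R}^u$ and a clean application of Lemma \ref{lem function eigenstate}$(\rm ii)$.
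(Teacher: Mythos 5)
Your proof is correct and takes essentially the same route as the paper: snap approximate minimizers to genuine spectral values using $d(\lambda_i,\spec(X))\to 0$, exploit compactness of $\overline{\R}^u$, and invoke Lemma~\ref{lem function eigenstate}$(\rm ii)$ for the characterization $\Phi(f(X))=f(\omega)$. Two cosmetic differences: you use the spectral bound $\|(X-\lambda)\xi\|\geq d(\lambda,\spec X)\|\xi\|$ where the paper applies Lemma~\ref{lem function eigenstate}$(\rm i)$ to $f=1-1_{[-\varepsilon,\varepsilon]}$ (equivalent moves), and you pass to a convergent subnet whereas the paper shows $(\lambda_i)$ has a unique accumulation point, hence converges as is — both suffice since the statement only asks for the existence of a net. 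One minor imprecision: the error term $2^{-|i|}$ is undefined over a general directed set $I$; instead choose $\lambda_i'$ with $\|(X-\lambda_i')\xi_i\|\le 2\inf_\lambda\|(X-\lambda)\xi_i\|$ when the infimum is nonzero and with $\|(X-\lambda_i')\xi_i\|$ arbitrarily small otherwise (or reindex over $I\times(0,1)$), which gives $\|(X-\lambda_i')\xi_i\|\to 0$ all the same.
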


\begin{df}
We call $\omega \in \overline{\spec(X)}^u$ the \emph{asymptotic eigenvalue} of $\Phi$ and we denote by $\mathcal{E}_{\omega}(X) \subset \mathcal{E}(X)$ the set of all approximate eigenstates of $X$ with asymptotic eigenvalue $\omega$.
\end{df}

\begin{proof}[Proof of Proposition \ref{prop asymptotic eigenvalue}]
The if direction follows from the definition of approximate eigenstates. Let us prove the other direction. Suppose that $\Phi \in \mathcal{E}(X)$. Then we can find a bounded net $(\xi_i)_{i \in I}$ in the domain of $X$ and a net $(\lambda_i)_{i \in I} \in \R$ such that $\Phi = \lim_i \langle \bdot \xi_i, \xi_i \rangle$ and $\lim_i \|(X-\lambda_i)\xi_i\|=0$. Then for every $\varepsilon > 0$, by applying Lemma \ref{lem function eigenstate}.$(\rm i)$ to $f=1-1_{[-\varepsilon, \varepsilon]}$, we have 
$$\lim_i \| \xi_i - 1_{[\lambda_i-\varepsilon,\lambda_i+\varepsilon]}(X)\xi_i\|=\lim_i \| \xi_i - 1_{[-\varepsilon,\varepsilon]}(X-\lambda_i)\xi_i\|=0.$$
In particular, for $i$ large enough we have $1_{[\lambda_i - \varepsilon, \lambda_i + \varepsilon]}(X) \neq 0$, i.e.\ $\spec(X) \cap [\lambda_i - \varepsilon, \lambda_i + \varepsilon] \neq \emptyset$. This holds for every $\varepsilon > 0$ so that the distance between $\lambda_i$ and $\spec(X)$ tends to $0$ when $i \to \infty$. Thus, up to replacing the net $(\lambda_i)_{i \in I}$ by a net $(\mu_i)_{i \in I}$ in $\spec(X)$ such that $\lim_i \| \lambda_i - \mu_i \|=0$, we can assume that $\lambda_i \in \spec(X)$ for all $i \in I$. Now, take $\omega$ an accumulation point of the net $(\lambda_i)_{i \in I}$ in $\overline{\spec(X)}^u$. Then, for every $f \in C_u(\R)$, $f(\omega)$ is an accumulation point of $(f(\lambda_i))_{i \in I}$. But, by Lemma \ref{lem function eigenstate}.$(\rm ii)$, we have $\lim_i f(\lambda_i)=\Phi(f(X))$, hence $\Phi(f(X))=f(\omega)$ for every $f \in C_u(\R)$. In particular, if $\omega'$ is another accumulation point of $(\lambda_i)_{i \in I}$, then we have $f(\omega')=\Phi(f(X))=f(\omega)$ for all $f \in C_u(\R)$ which means that $\omega'=\omega$. This shows that $\omega$ is the unique accumulation point of $(\lambda_i)_{i \in I}$ and $\lim_i \lambda_i=\omega$.
\end{proof}

In the applications to type $\III$ factors, we will be interested in approximate eigenstates of the logarithm of the modular operator $\Delta$. For every $\lambda \in \R^*_+$, it is easy to check that $\mathcal{E}_{\lambda}(\Delta)=\mathcal{E}_{\log \lambda}(\log \Delta)$. However, since the logarithm is not uniformly continuous, it is \emph{not true} in general that $\mathcal{E}(\Delta) = \mathcal{E}(\log \Delta)$. Instead, we give the following description.

\begin{prop} \label{prop log}
Let $H$ be a Hilbert space and let $\Delta$ be a positive definite operator on $H$. Then for any state $\Phi \in \B(H)^*$, the following are equivalent: 
\begin{itemize}
\item [$(\rm i)$] $\Phi \in \mathcal{E}( \log \Delta )$
\item [$(\rm ii)$] There exists a net of vectors $(\xi_i)_{i \in I}$ in the domain of $\Delta$ and a net $(\lambda_i)_{i \in I}$ in $\R^*_+$ such that $\Phi = \lim_i \langle \bdot \xi_i, \xi_i \rangle$ in the weak$^*$ topology and
$$ \lim_i  \| (\lambda_i^{-1} \Delta -1) \xi_i \|=0.$$
\item [$(\rm iii)$] There exists a net of vectors $(\xi_i)_{i \in I}$ which are analytic for $\Delta$ and a net $(\lambda_i)_{i \in I}$ in $\R^*_+$ such that $\Phi = \lim_i \langle \bdot \xi_i, \xi_i \rangle$ in the weak$^*$ topology and the net of analytic functions
$$f_i(z)= (\lambda_i^{-z} \Delta^z -1) \xi_i $$
converges to $0$ uniformly on all compact subsets of $\C$.
\end{itemize}
Moreover, if $\Phi \in \mathcal{E}_\omega( \log \Delta)$, we can choose the net $(\lambda_i)_{i \in I}$ such that $\lambda_i \in \spec(\Delta)$ for all $i \in I$ and $\lim_i \log (\lambda_i)=\omega$.
\end{prop}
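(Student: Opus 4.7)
The plan is to establish the chain $(\rm iii) \Rightarrow (\rm ii) \Rightarrow (\rm i) \Rightarrow (\rm iii)$, and then the moreover clause. The implication $(\rm iii) \Rightarrow (\rm ii)$ is immediate: specialize the uniformly convergent family $f_i(z)$ at $z = 1$ and note that analytic vectors for $\Delta$ lie in $\dom(\Delta)$. The main obstacle throughout is the unboundedness of $\log$ and $\exp$, which prevents a direct application of Lemma \ref{lem function eigenstate}; this is handled in each direction by a spectral cutoff to a compact region around the approximate eigenvalue, plus the observation that such a cutoff does not affect the weak$^*$ limit because $(\xi_i)$ is bounded.

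For $(\rm ii) \Rightarrow (\rm i)$: given $(\xi_i),(\lambda_i)$ as in (ii), set $\mu_i = \log \lambda_i$ and let $P_i = 1_{[1/2,3/2]}(\lambda_i^{-1} \Delta)$. Applying Lemma \ref{lem function eigenstate}.(i) to $f = 1 - 1_{[-1/2,1/2]}$ acting on $\lambda_i^{-1}\Delta - 1$, I get $\|\xi_i - P_i\xi_i\| \to 0$. On the range of $P_i$, $\log \Delta - \mu_i = \log(\lambda_i^{-1}\Delta)$ is bounded, so $P_i \xi_i \in \dom(\log \Delta)$; using the elementary estimate $|\log(1+x)| \leq 2|x|$ on $|x| \leq 1/2$ and the fact that $P_i$ commutes with $\Delta$ and is a contraction, I bound
$$\|(\log \Delta - \mu_i) P_i \xi_i\| \leq 2\|(\lambda_i^{-1}\Delta - 1) P_i \xi_i\| \leq 2\|(\lambda_i^{-1}\Delta - 1)\xi_i\| \to 0.$$
Since $\|\xi_i - P_i\xi_i\| \to 0$ and $(\xi_i)$ is bounded, $\Phi = \lim_i \langle \bdot P_i \xi_i, P_i \xi_i \rangle$, so $\Phi \in \mathcal{E}(\log \Delta)$.

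For $(\rm i) \Rightarrow (\rm iii)$: given $(\xi_i), (\mu_i)$ witnessing $\Phi \in \mathcal{E}(\log\Delta)$, set $\lambda_i = e^{\mu_i}$ and $\eta_i = Q_i \xi_i$ where $Q_i = 1_{[\mu_i - 1, \mu_i + 1]}(\log \Delta)$. Again by Lemma \ref{lem function eigenstate}.(i) (with $f = 1 - 1_{[-1,1]}$ applied to $\log \Delta - \mu_i$), $\|\xi_i - \eta_i\| \to 0$. On the range of $Q_i$, $\log \Delta$ has compact spectrum, so $\eta_i$ is an entire analytic vector for $\Delta$ with $\Delta^z \eta_i$ holomorphic in $z \in \C$. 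From the elementary inequality $|e^{zx} - 1| \leq |z|\,|x|\, e^{|z|}$ valid for $|x| \leq 1$ and $z \in \C$, applied through the functional calculus on the range of $Q_i$, I obtain
$$\|(\lambda_i^{-z} \Delta^z - 1)\eta_i\| = \|(e^{z(\log \Delta - \mu_i)} - 1)\eta_i\| \leq |z|\, e^{|z|}\, \|(\log \Delta - \mu_i) \eta_i\| \leq |z|\, e^{|z|}\, \|(\log \Delta - \mu_i) \xi_i\|,$$
which tends to $0$ uniformly on every compact subset of $\C$ since the last factor does not depend on $z$.

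Finally, for the moreover clause, if $\Phi \in \mathcal{E}_\omega(\log \Delta)$, then Proposition \ref{prop asymptotic eigenvalue} applied to $X = \log \Delta$ lets me choose the net $(\mu_i)$ in $\spec(\log \Delta)$ with $\mu_i \to \omega$; by the spectral mapping theorem, $\lambda_i = e^{\mu_i} \in \spec(\Delta)$, and $\log \lambda_i = \mu_i \to \omega$, as required.
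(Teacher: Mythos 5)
Your proof is correct and follows essentially the same route as the paper's: spectral cutoff projections onto a compact window around the asymptotic eigenvalue, together with elementary scalar estimates comparing $|x|$, $|e^{zx}-1|$, and $|\log(1+x)|$ on that window. The only cosmetic difference is that you cut off with $1_{[1/2,3/2]}(\lambda_i^{-1}\Delta)$ for $(\rm ii)\Rightarrow(\rm i)$ whereas the paper uses $1_{[\log\lambda_i-1,\log\lambda_i+1]}(\log\Delta)$, and your invocation of Lemma \ref{lem function eigenstate}.$(\rm i)$ with $\lambda_i^{-1}\Delta-1$ is not a literal application of its statement (that lemma fixes one operator $X$), but it is re-proved by the identical one-line estimate, so there is no real gap.
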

\begin{proof}
$(\rm i) \Rightarrow (\rm iii)$. Take a net of unit vectors $(\xi_i)_{i \in I}$ in the domain of $X=\log \Delta$ and a net $(\lambda_i)_{i \in I}$ in $\R^*_+$ such that $\Phi = \lim_i \langle \bdot \xi_i, \xi_i \rangle$ in the weak$^*$ topology and $ \lim_i  \| (X-\log(\lambda_i)) \xi_i \|=0$. Let $\eta_i=1_{[\log(\lambda_i)-1, \log(\lambda_i)+1]}(X) \xi_i$. Then we have $\lim_i \|x_i-\eta_i \|=0$ and in particular $\Phi=\lim_i \langle \bdot \eta_i, \eta_i \rangle$. Moreover, $\eta_i$ is $\Delta$-analytic for all $i \in I$. Take $K$ a compact subset of $\C$. Then we can find a constant $\kappa > 0$ such $|e^{zx}-1| \leq \kappa |x|$ for all $z \in K$ and all $x \in [-1,1]$. This implies that for all $z \in K$, we have
$$ \| (\lambda_i^{-z} \Delta^{z} - 1)\eta_i \|= \| (e^{z(X-\log(\lambda_i))} - 1)\eta_i \| \leq \kappa \| (X-\log(\lambda_i)) \eta_i \| \to 0 \text{ when } i \to \infty$$
as we wanted.

The implication $(\rm iii) \Rightarrow (\rm ii)$ is obvious. For $(\rm ii) \Rightarrow (\rm i)$ we proceed in the same way, by letting $\eta_i = 1_{[\log \lambda_i-1, \log \lambda_i+1]}(X)\xi_i$ and using the fact that $|x| \leq \kappa |e^x-1|$ for all $x \in [-1,1]$ and some constant $\kappa > 0$.
\end{proof}

Finally, let us end this section with the following strong fixed point property which is the main tool we use to construct strongly invariant states.

\begin{prop} \label{prop strong fixed point}
Let $H$ be a Hilbert space and $\sigma: \R \curvearrowright \B(H)$ a flow. Let $K \subset S(\B(H))$ be a non-empty weak$^*$-closed set which is strongly $\sigma$-invariant, meaning that $\Psi \circ \sigma_\mu \in K$ for all $\Psi \in K$ and all $\mu \in S(\R)$. Then there exists a state $\Psi \in K$ which is strongly $\sigma$-invariant.
\end{prop}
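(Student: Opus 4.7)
The plan is to exploit the amenability of $\R$ via convolution against a Følner sequence in $\rL^1(\R)^+$. First, fix any $\Psi_0 \in K$ (possible since $K \neq \emptyset$) and take $f_n = \tfrac{1}{2n} 1_{[-n,n]} \in \rL^1(\R)^+$, viewed as a probability density on $\R$. Set $\Psi_n = \Psi_0 \circ \sigma_{f_n}$. Since $f_n \in S(\R)$ and $K$ is strongly $\sigma$-invariant, we have $\Psi_n \in K$ for every $n$. By Banach--Alaoglu the state space $S(\B(H))$ is weak$^*$-compact, so $(\Psi_n)$ admits a weak$^*$-accumulation point $\Psi$, which lies in $K$ by weak$^*$-closedness. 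The claim will be that this $\Psi$ is strongly $\sigma$-invariant.

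The key ingredient is the norm estimate
\[
\|\Psi_n \circ \sigma_\mu - \Psi_n\| \;=\; \|\Psi_0 \circ \sigma_{f_n \ast \mu - f_n}\| \;\leq\; \|f_n \ast \mu - f_n\|_1 \qquad (\mu \in S(\R)),
\]
valid for every $n$. It rests on two standard facts about the flow $\sigma$ on $\B(H)$: the semigroup identity $\sigma_{f_n} \circ \sigma_\mu = \sigma_{f_n \ast \mu}$ as maps $\B(H) \to \B(H)$ (which, since $\sigma_\mu$ is defined by duality with the predual, follows from Fubini applied at the level of normal functionals), and the operator-norm bound $\|\sigma_\nu : \B(H) \to \B(H)\| \leq \|\nu\|_1$ for $\nu \in \rL^1(\R)$, obtained by dualizing the isometric averaging on $\B(H)_*$. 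The right-hand side $\|f_n \ast \mu - f_n\|_1$ tends to $0$ as $n \to \infty$ for every fixed $\mu \in S(\R)$: for Dirac masses $\mu = \delta_a$ it equals $|a|/n$, and the general case follows by Fubini and dominated convergence since $\mu$ is a probability measure.

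Given the estimate, the conclusion is routine: along any subnet $\Psi_{n_i} \to \Psi$ in weak$^*$, for any $T \in \B(H)$ and $\mu \in S(\R)$,
\[
|\Psi(\sigma_\mu(T)) - \Psi(T)| \;=\; \lim_i |\Psi_{n_i}(\sigma_\mu(T)) - \Psi_{n_i}(T)| \;\leq\; \limsup_i \|\Psi_{n_i} \circ \sigma_\mu - \Psi_{n_i}\| \cdot \|T\| \;=\; 0.
\]
Hence $\Psi \circ \sigma_\mu = \Psi$ for every $\mu \in S(\R)$, so $\Psi \in K$ is strongly $\sigma$-invariant. The one delicate point, and the main obstacle, is justifying the identity $\sigma_{f_n} \circ \sigma_\mu = \sigma_{f_n \ast \mu}$ at the level of $\B(H)$: since $\mu$ is only a probability measure on $\R$ and $\Psi_0$ need not be normal, neither $\sigma_{f_n}(\sigma_\mu(T))$ nor $\sigma_{f_n \ast \mu}(T)$ is a priori a genuine Bochner integral in $\B(H)$, so equality has to be checked by pairing with normal functionals, in the spirit of the preliminary discussion on one-parameter groups.
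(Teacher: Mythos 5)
Your proof is correct but takes a genuinely different route from the paper's. The paper first produces an ordinary $\sigma$-invariant state $\Psi \in K$ by invoking amenability of $\R$ acting on the compact set $K$ (a fixed-point argument that tacitly uses convexity of $K$, which holds in all of the paper's applications), then sets $\Psi' = \Psi \circ \sigma_f$ for a single $f \in \rL^1(\R)^+$ and shows $\Psi'$ is strongly invariant by observing that $t \mapsto \sigma_t(\sigma_f(T))$ is norm-continuous (so that the $\sigma$-invariance of $\Psi$ can be pushed through the integral defining $\sigma_\mu$) and by commuting $\sigma_f$ with $\sigma_\mu$. You instead push a fixed $\Psi_0 \in K$ through an explicit F{\o}lner sequence $f_n$ and extract a weak$^*$-cluster point, replacing the fixed-point argument by the quantitative estimate $\|f_n \ast \mu - f_n\|_1 \to 0$. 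Both proofs rest on the same delicate identity $\sigma_{f_n} \circ \sigma_\mu = \sigma_{f_n \ast \mu}$ on $\B(H)$, which must be checked by pairing against normal functionals and applying Fubini, exactly as you flag and for exactly the reason the paper's preliminaries warn about. Your route is more hands-on, avoids the Markov--Kakutani/Day fixed-point theorem, and, as a bonus, works without any convexity assumption on $K$, using only that it is weak$^*$-closed and stable under all the $\sigma_\mu$.
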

\begin{proof}
Since $K$ is $\sigma$-invariant and $\R$ is amenable as a discrete group, we can find $\Psi \in K$ which is $\sigma$-invariant. Take any $f \in \rL^1(\R)^+$ with $\|f\|_1=1$ and let $\Psi'=\Psi \circ \sigma_f \in K$. We will show that $\Psi'$ is strongly $\sigma$-invariant.

Take $T \in \B(H)$ and let $T'=\sigma_{f}(T)$. Observe that the map $t \mapsto \sigma_t(T')$ is norm continuous because we have
$$ \| \sigma_t(T')-T' \| \leq \| f(\bdot + t)-f \|_1 \cdot \| T \|.$$
Therefore, for any $\mu \in S(\R)$, we have 
$$\Psi(\sigma_\mu(T'))=\int_{\R}  \Psi(\sigma_t(T')) \rd \mu(t)=\int_{\R} \Psi(T') \rd \mu(t)=\Psi(T').$$
This implies that $\Psi'$ is strongly $\sigma$-invariant because
$$ \Psi'(\sigma_\mu(T))=\Psi(\sigma_{f}( \sigma_\mu(T)))=\Psi(\sigma_{\mu}( \sigma_{f}(T)))=\Psi(\sigma_\mu(T'))=\Psi(T')=\Psi'(T).$$
\end{proof}

\section{Two lemmas}

In this section, we prove two lemmas that we will use in the proofs of the main theorems.

\begin{lem} \label{lem binormal invariant}
Let $M$ be a von Neumann algebra and $\sigma : \R \curvearrowright \B(\rL^2(M))$ a flow which leaves $\lambda(M)$ and $\rho(M)$ globally invariant. Let $\Phi$ be a state on $C^*_{\lambda \cdot \rho}(M)$ such that $\Phi|_{\lambda(M)}$ and $\Phi|_{\rho(M)}$ are both normal states. Suppose that $\Phi$ is $\sigma$-invariant. Then the set $K$ of all states $\Psi \in \B(\rL^2(M))^*$ extending $\Phi$ is strongly $\sigma$-invariant. In particular, there exists an extension $\Psi \in K$ which is strongly $\sigma$-invariant.
\end{lem}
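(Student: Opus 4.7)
The set $K$ is non-empty by the Hahn--Banach state extension theorem and weak$^*$-closed as the intersection of the state space of $\B(\rL^2(M))$ with the hyperplanes $\{\Psi : \Psi(T) = \Phi(T)\}$ for $T \in C^*_{\lambda \cdot \rho}(M)$. Granted the strong $\sigma$-invariance of $K$, the existence of a strongly $\sigma$-invariant $\Psi \in K$ follows immediately from Proposition \ref{prop strong fixed point}, so the main content is to verify that $\Psi \circ \sigma_\mu \in K$ for every $\Psi \in K$ and $\mu \in S(\R)$.

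Fix such $\Psi$ and $\mu$. As $\Psi \circ \sigma_\mu$ is a state, the task is to check $\Psi(\sigma_\mu T) = \Phi(T)$ for all $T \in C^*_{\lambda \cdot \rho}(M)$. By linearity and norm density, reduce to $T = \lambda(a)\rho(b)$; since $\sigma_t$ leaves $\lambda(M)$ and $\rho(M)$ globally invariant, write $\sigma_t \lambda(a) = \lambda(\theta_t a)$ and $\sigma_t \rho(b) = \rho(\theta'_t b)$ for induced flows $\theta, \theta'$ on $M$, so $\sigma_t T = \lambda(\theta_t a) \rho(\theta'_t b)$, and by the hypothesized $\sigma$-invariance, $\Phi(\sigma_t T) = \Phi(T)$ for every $t \in \R$.

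The crucial input is the binormality of $\Phi$. Since $(y - y_i)^*(y - y_i) \in \rho(M)$, the Cauchy--Schwarz inequality for states yields
$$
|\Phi(x(y - y_i))|^2 \leq \Phi(xx^*)\,\Phi((y - y_i)^*(y - y_i)) \leq \|x\|^2\,\Phi|_{\rho(M)}\!\left((y-y_i)^*(y-y_i)\right),
$$
and the right-hand side tends to zero whenever $y_i \to y$ $\sigma$-strongly in $\rho(M)$, by normality of $\Phi|_{\rho(M)}$. Thus $y \mapsto \Phi(xy)$ is $\sigma$-strongly continuous on bounded sets of $\rho(M)$, hence normal (by Kaplansky density), and symmetrically in $x$. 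Since $\Psi$ extends $\Phi$ on products, the bilinear form $(x, y) \mapsto \Psi(\lambda(x)\rho(y))$ on $M \times M$ inherits separate normality.

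To conclude, approximate $\mu$ weakly by a net of finitely supported probability measures $\mu_n \to \mu$. For each $n$, $\sigma_{\mu_n}(\lambda(a)\rho(b)) = \sum_k c_{n,k}\, \lambda(\theta_{t_{n,k}}a)\rho(\theta'_{t_{n,k}}b)$ lies in $C^*_{\lambda \cdot \rho}(M)$, and by $\sigma$-invariance of $\Phi$,
$$
\Psi(\sigma_{\mu_n} T) = \Phi(\sigma_{\mu_n} T) = \Phi(T).
$$
The main obstacle is passing to the limit: the convergence $\sigma_{\mu_n} T \to \sigma_\mu T$ holds only ultraweakly in $\B(\rL^2(M))$, a topology with respect to which $\Psi$ is not in general continuous, and the standard integral formula $\Psi(\sigma_\mu T) = \int \Psi(\sigma_t T)\,\rd\mu(t)$ is a priori valid only against normal functionals. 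The point is that the separate normality of $(x,y) \mapsto \Psi(\lambda(x)\rho(y))$, combined with the separate ultraweak continuity of the curves $t \mapsto \theta_t a$ and $t \mapsto \theta'_t b$ inside $M$, suffices to reduce the pairing $\Psi(\sigma_\mu T)$ to a scalar integration. Hence
$$
\Psi(\sigma_\mu T) = \int_\R \Psi(\sigma_t T)\,\rd\mu(t) = \int_\R \Phi(T)\,\rd\mu(t) = \Phi(T),
$$
completing the proof.
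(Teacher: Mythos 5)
Your setup and reduction to $T = \lambda(a)\rho(b)$ are fine, and the binormality observation (separate normality of $(x,y)\mapsto\Psi(\lambda(x)\rho(y))$ via Cauchy--Schwarz) is exactly the right tool. But the final step is a genuine gap, and you in fact flagged it yourself: you need $\Psi(\sigma_\mu T)=\int_\R\Psi(\sigma_t T)\,\rd\mu(t)$, and the assertion that ``separate normality of the form plus separate ultraweak continuity of $t\mapsto\theta_t a$, $t\mapsto\theta'_t b$ suffices to reduce the pairing to a scalar integration'' is not an argument. The operator $\sigma_\mu T$ is \emph{not} of the form $\lambda(x)\rho(y)$; it is only a weak$^*$ limit of the discretizations $\sigma_{\mu_n}T$, each of which is a \emph{sum} of such products with a number of terms going to infinity. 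Binormality gives continuity of $\Psi$ along a single product factor, but gives no control over $\Psi$ applied to a weak$^*$-convergent net of longer and longer sums of products --- that would amount to $\Psi$ being $\sigma$-weakly continuous on $C^*_{\lambda\cdot\rho}(M)$, which is strictly stronger than binormality (binormal states on $C^*_{\lambda\cdot\rho}(M)$ need not be ultraweakly continuous). So the limit $\lim_n\Psi(\sigma_{\mu_n}T)=\Psi(\sigma_\mu T)$ does not follow.

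The correct fix is to approximate $T$, not $\mu$. Restrict first to $T\in\mathcal A\cdot\mathcal B$ where $\mathcal A$ (resp.\ $\mathcal B$) is the $*$-subalgebra of elements of $\lambda(M)$ (resp.\ $\rho(M)$) for which $t\mapsto\sigma_t(\cdot)$ is \emph{norm} continuous. For such $T$, $\sigma_\mu T$ is an honest Bochner integral, so the identity $\Psi(\sigma_\mu T)=\int\Psi(\sigma_t T)\,\rd\mu(t)$ holds for \emph{any} bounded functional, and you get $\Psi'(T)=\Phi(T)$. Then extend to general $T=\lambda(a)\rho(b)$ by approximating $a_i\to a$, $b_i\to b$ $*$-strongly with $a_i\in\mathcal A$, $b_i\in\mathcal B$ and bounded norms, using the binormality of both $\Phi$ and $\Psi'=\Psi\circ\sigma_\mu$ to pass the limit. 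For this last step you also need to know in advance that $\Psi'|_{\lambda(M)}$ and $\Psi'|_{\rho(M)}$ are normal: since $\sigma_\mu$ leaves $\lambda(M)$ globally invariant and $\Phi|_{\lambda(M)}$ is normal and $\sigma$-invariant, one has $\Psi'|_{\lambda(M)}=\Phi|_{\lambda(M)}$, and likewise for $\rho(M)$. This is the route the paper takes, and it avoids the issue you ran into.
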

\begin{proof}
Take $\Psi \in K$ and take $\mu$ a probability measure on $\R$. We have to show that $\Psi'=\Psi \circ \alpha_\mu \in K$, i.e\ that $\Psi'$ also extends $\Phi$. First, observe that since $\Phi$ is normal and $\sigma$-invariant on $\lambda(M)$ and $\rho(M)$, we have $\Psi'|_{\lambda(M)}=\Psi|_{\lambda(M)}=\Phi|_{\lambda(M)}$ and $\Psi'|_{\rho(M)}=\Psi|_{\rho(M)}=\Phi|_{\rho(M)}$. Let $\mathcal{A}=\{ x \in \lambda(M) \mid t \mapsto \sigma_t(x) \text{ is norm continuous} \}$ and $\mathcal{B}=\{ x \in \rho(M) \mid t \mapsto \sigma_t(x) \text{ is norm continuous} \}$. Then $\mathcal{A}$ and $\mathcal{B}$ are strongly dense $*$-subalgebra of $\lambda(M)$ and $\rho(M)$ respectively and we have $\Psi'(T)=\Psi(T)=\Phi(T)$ for all $T \in \mathcal{A}\cdot\mathcal{B}$. Pick $a \in \lambda(M)$ and $b \in \rho(M)$. Take two bounded nets $(a_i)_{i \in I}$ in $\mathcal{A}$ and $(b_i)_{i \in I}$ in $\mathcal{B}$ such that $a_i \to a$ and $b_i \to b$ $*$-strongly. We can assume that $\|a_i \| \leq \|a \|$ and $\|b_i \| \leq \|b \|$ for all $i \in I$. Since $\Psi'|_{\lambda(M)}=\Phi|_{\lambda(M)}$ and $\Psi'|_{\rho(M)}=\Phi|_{\rho(M)}$ are both normal states, we have 
$$ \lim_i \| ab-a_ib_i\|_{\Phi} \leq \lim_i \| a \| \|b-b_i\|_{\Phi}+\| b \| \|a-a_i\|_{\Phi} =0$$
and 
$$ \lim_i \| ab-a_ib_i\|_{\Psi'} \leq \lim_i \| a \| \|b-b_i\|_{\Psi'}+\| b \| \|a-a_i\|_{\Psi'} =0.$$
Therefore, we obtain $ \Phi(ab)=\lim_i \Phi(a_ib_i)$ and $\Psi'(ab)=\lim_i \Psi'(a_ib_i)$. But since $a_ib_i \in \mathcal{A} \cdot \mathcal{B}$, we have $\Psi'(a_ib_i)=\Phi(a_ib_i)$ for all $i \in I$, hence $\Psi'(ab)=\Phi(ab)$. Since this holds for every $a \in \lambda(M)$ and $b \in \rho(M)$, we conclude that $\Psi'(T)=\Phi(T)$ for all $T \in C^*_{\lambda \cdot \rho}(M)$, i.e.\ $\Psi' \in K$. This shows that $K$ is strongly $\sigma$-invariant and we conclude by Proposition \ref{prop strong fixed point}.
\end{proof}

The second lemma is intuitively easy to understand. It says that if $M \cong M \ovt R_\infty$, so that $M$ has central sequences with arbitrary asymptotic eigenvalue, then one can perturb any approximate eigenstate $\Psi$ in order to change its asymptotic eigenvalue \emph{without} changing the values that $\Psi$ takes on $C^*_{\lambda \cdot \rho}(M)$.

\begin{lem} \label{lem absorb}
Let $M$ be a von Neumann algebra with two faithful normal states $\varphi_1,\varphi_2$. Let $\Delta=\Delta_{\varphi_1,\varphi_2}$ and $\sigma_t=\Ad(\Delta^{\ri t})$ for all $t \in \R$. Suppose that $M \cong M \ovt R_\infty$. Then for any $\Psi \in \conv \mathcal{E}(\log \Delta)$, we can find $\Psi' \in \conv \mathcal{E}_0(\log \Delta)$ such that $\Psi'|_{C^*_{\lambda \cdot \rho}(M)}=\Psi|_{C^*_{\lambda \cdot \rho}(M)}$. 
\end{lem}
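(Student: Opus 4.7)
My plan is to use the tensor factor $R_\infty$ in $M \cong M \ovt R_\infty$ to produce asymptotically central, modularly tilted unitaries in $M$ that allow one to shift the asymptotic eigenvalue of any approximate eigenstate of $\log \Delta$ down to $0$ while preserving its restriction to $C^*_{\lambda \cdot \rho}(M)$. First I would reduce to a single approximate eigenstate: the set of states on $\B(\rL^2(M))$ with a prescribed restriction to $C^*_{\lambda \cdot \rho}(M)$ is weak-$*$ closed and convex, and so is $\conv \mathcal{E}_0(\log \Delta)$, so a Hahn-Banach argument (duality with self-adjoint elements $T \in C^*_{\lambda \cdot \rho}(M)$) combined with the disjoint decomposition $\mathcal{E}(\log \Delta) = \bigsqcup_\omega \mathcal{E}_\omega(\log \Delta)$ from Proposition \ref{prop asymptotic eigenvalue} reduces the task to producing, for each individual $\Psi \in \mathcal{E}_\omega(\log \Delta)$, some $\Psi' \in \mathcal{E}_0(\log \Delta)$ with $\Psi'|_{C^*_{\lambda \cdot \rho}(M)} = \Psi|_{C^*_{\lambda \cdot \rho}(M)}$.

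The key ingredient is the following consequence of the type $\III_1$ AFD structure of $R_\infty$: after fixing an isomorphism $M \cong M \ovt R_\infty$ and, via a compatible product-state decomposition $\varphi_i = \tilde\varphi_i \otimes \chi$ (which only modifies $\Delta_{\varphi_1, \varphi_2}$ by a Connes cocycle and does not affect the statement), for any $\mu \in \R^*_+$ there exists a sequence of unitaries $(u_n) \subset R_\infty \subset M$ that is asymptotically central in $M$ and satisfies $\|\sigma^{\varphi_1}_t(u_n) - \mu^{\ri t} u_n\|_{\varphi_1} \to 0$ uniformly on compact subsets of $\R$. Such sequences exist because $R_\infty$ is of type $\III_1$ (so its modular spectrum is all of $\R^*_+$) and because $R_\infty \cong R_\infty \ovt R_\infty$ provides an internal central sequence algebra in which modular eigenvectors with any prescribed eigenvalue are available.

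Given $\Psi \in \mathcal{E}_\omega(\log \Delta)$, by Proposition \ref{prop log} I pick analytic vectors $\xi_n \in \rL^2(M)$ and $\lambda_n \in \R^*_+$ with $\log \lambda_n \to \omega$, $\Psi = \lim_n \langle \bdot \, \xi_n, \xi_n \rangle$, and $(\lambda_n^{-z} \Delta^z - 1) \xi_n \to 0$ uniformly on compact subsets of $\C$. Choosing $u_n \in R_\infty$ with $\mu_n = \lambda_n^{-1}$ and setting $\xi_n' = u_n \xi_n$, the identity $\Delta^{\ri t} \lambda(u_n) \Delta^{-\ri t} = \lambda(\sigma^{\varphi_1}_t(u_n))$ gives
$$\Delta^{\ri t}(u_n \xi_n) = \sigma^{\varphi_1}_t(u_n)\,\Delta^{\ri t}\xi_n \approx \lambda_n^{-\ri t} u_n \cdot \lambda_n^{\ri t} \xi_n = u_n \xi_n,$$
so by Proposition \ref{prop log} again, $(\xi_n')$ represents an approximate eigenstate of $\Delta$ with asymptotic eigenvalue $0$. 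Meanwhile, asymptotic centrality gives
$$\langle a \, u_n \xi_n \, b, u_n \xi_n \rangle = \langle u_n^* a u_n \, \xi_n \, b, \xi_n \rangle = \langle a \, \xi_n \, b, \xi_n \rangle + o(1)$$
for all $a, b \in M$, so any weak-$*$ accumulation point $\Psi'$ of $(\langle \bdot \, \xi_n', \xi_n' \rangle)$ lies in $\mathcal{E}_0(\log \Delta)$ and agrees with $\Psi$ on $C^*_{\lambda \cdot \rho}(M)$. The main technical obstacle is producing the unitaries $u_n$: reducing the given pair $(\varphi_1, \varphi_2)$ to a compatible product-state decomposition in $M \ovt R_\infty$ and verifying that the corresponding central eigenvector sequences survive the cocycle perturbation relating $\Delta_{\varphi_1, \varphi_2}$ to the product-state relative modular operator is the most delicate part of the argument.
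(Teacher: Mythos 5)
Your overall reduction (to a single $\Psi \in \mathcal{E}(\log\Delta)$, using that the target set is weak-$*$ closed and convex) is essentially the paper's first step. The gap is in the claimed key ingredient: for any $\mu \in \R^*_+$, asymptotically central \emph{unitaries} $(u_n) \subset M$ with $\|\sigma^{\varphi_1}_t(u_n) - \mu^{\ri t}u_n\|_{\varphi_1} \to 0$ uniformly on compact $t$. This is impossible when $\mu > 1$. Indeed, for a unitary $u$ one has $\|u\varphi_1^{1/2}\| = 1$ and also $\|\Delta_{\varphi_1}^{1/2}(u\varphi_1^{1/2})\| = \|\varphi_1^{1/2}u\| = \varphi_1(uu^*)^{1/2} = 1$. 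If $\xi_n := u_n\varphi_1^{1/2}$ is an approximate eigenvector of $X = \log\Delta_{\varphi_1}$ with eigenvalue $c_n = \log\mu$, then (after the usual cut-off argument, e.g.\ Lemma \ref{lem function eigenstate}.(i)) the spectral measure $\nu_n$ of $X$ with respect to $\xi_n$ has total mass $1$, mass $\to 1$ on $[c_n-\delta, c_n+\delta]$, and satisfies $\int e^{x}\,\rd\nu_n(x) = \|\Delta_{\varphi_1}^{1/2}\xi_n\|^2 = 1$. Restricting the integral to $[c_n-\delta, c_n+\delta]$ yields $e^{c_n-\delta}(1-o(1)) \le 1$, so $c_n \le \delta + o(1)$ for every $\delta$; hence $\log\mu \le 0$. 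Since you set $\mu_n = \lambda_n^{-1}$ with $\log\lambda_n \to \omega$, this breaks down exactly when $\omega < 0$, which is a typical case (indeed, for a $\sigma$-invariant state $\Psi$ the eigenvalue distribution is symmetric). In short, a unitary cannot be an approximate modular eigenvector at a value strictly above $1$ of a \emph{state}; nontrivial modular eigenoperators are necessarily non-unitary partial isometries.

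The paper resolves exactly this asymmetry by splitting into the two cases $\omega \ge 0$ and $\omega \le 0$ and using non-unitary perturbations applied on the \emph{correct side}: for $\omega \ge 0$ one uses \emph{isometries} $v_n$ acting by $\lambda(v_n)$ on the left (so $\|\lambda(v_n)\eta\|=\|\eta\|$, while $\varphi_1(v_nv_n^*)$ can be small, allowing $\lambda^{-1}v_n\xi_1 \approx \xi_1 v_n$ with $\lambda > 1$), and for $\omega \le 0$ one uses \emph{coisometries} $v_n$ acting by $\rho(v_n)$ on the right (so $\|\rho(v_n)\eta\|=\|\eta\|$). Moreover, the paper works directly with vectors of the form $x\xi_2$ (resp.\ $\xi_1 x$) from the graph of $\Delta^{1/2}$ rather than arbitrary $\Delta$-analytic $\xi_n$: this makes the estimate
$$\|v_n x\xi_2 - \xi_1 v_n x\| = \|v_n(x\xi_2 - \lambda^{-1}\xi_1 x)\| = \|x\xi_2 - \lambda^{-1}\xi_1 x\|$$
a clean exact computation. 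Your version tries to pass from $\|\sigma_t^{\varphi_1}(u_n) - \mu^{\ri t}u_n\|_{\varphi_1} \to 0$ to $\|(\sigma_t^{\varphi_1}(u_n)-\mu^{\ri t}u_n)\Delta^{\ri t}\xi_n\|\to 0$, which is a second gap: $\|\cdot\|_{\varphi_1}$-smallness of the left factor does not control $\|\cdot\cdot\Delta^{\ri t}\xi_n\|$ unless $\Delta^{\ri t}\xi_n$ is dominated by $\varphi_1^{1/2}$, which need not hold for a general $\Delta$-analytic vector. You flag the cocycle bookkeeping as the main obstacle, but the real obstruction is the unitary/isometry issue above.

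Proposition Check: the estimate you wrote,
\begin{align*}
\langle a\,u_n\xi_n\,b,\ u_n\xi_n\rangle &= \langle u_n^*au_n\,\xi_n\,b,\ \xi_n\rangle + o(1),
\end{align*}
is correct, and the corresponding step with isometries $v_n$ requires the single-sided version $\langle a\,v_nx\xi_2\,b, v_nx\xi_2\rangle = \langle v_n^*av_n\,x\xi_2\,b,\ x\xi_2\rangle$, which works because $\lambda(v_n)^*\lambda(v_n)=1$. So your idea for preserving the restriction to $C^*_{\lambda\cdot\rho}(M)$ carries over once you switch to isometries/coisometries and introduce the case split.
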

\begin{proof}
Observe that the set of all states $\Psi$ satisfying the conclusion of the lemma is convex and weak$^*$-closed. Hence, it is enough to show that it contains $\mathcal{E}(\log\Delta)$. So take $\Psi \in \mathcal{E}(\log\Delta)$. Let $\omega \in \overline{\R}^u$ be the asymptotic eigenvalue of $\Psi$. Define a directed set $J$ consisting of all pairs $(F,\varepsilon)$ where $F$ is a finite subset of $M$ and $\varepsilon$ is a positive real number. Let $\xi_1=\varphi_1^{1/2}$ and $\xi_2=\varphi_2^{1/2}$ and recall that the graph of $\Delta^{1/2}$ is the closure of $\{ (x\xi_2,\xi_1x) \mid x \in M\}$.

Suppose first that $\omega \geq 0$ (by this, we mean that $\omega \in \overline{\R}_+^u$). Fix $j=(F,\varepsilon) \in J$. Then, by Proposition \ref{prop log}, we can find $\lambda \geq 1$ and $x \in M$ such that $\|x\xi_2-\lambda^{-1} \xi_1x\| \leq \varepsilon$ and $|\Psi(T) - \langle T x\xi_2, x\xi_2 \rangle | \leq \varepsilon$ for all $T \in \lambda(F)\rho(F)$. Since $M \cong M \ovt R_\infty$, we can find a sequence of isometries $v_n \in M$ such that $\lim_n \|\lambda^{-1} v_n \xi_1-\xi_1 v_n\|=0$ and $\lim_n v_n^*av_n=a$ strongly for all $a \in F$. Then, we have $\lim_n \langle T  v_n x\xi_2,   v_n x \xi_2 \rangle = \langle T x\xi_2, x\xi_2 \rangle$ for all $T \in \lambda(F)\rho(F)$ and
$$\lim_n \| v_n x \xi_2-\xi_1v_n x \|=\lim_n \| v_n x \xi_2 -  \lambda^{-1} v_n \xi_1 x \| =\| x\xi_2 - \lambda^{-1} \xi_1x\| \leq \varepsilon.$$
Therefore, if we take $y_j=v_nx$ for $n$ large enough, we will have $\|y_j\xi_2-\xi_1y_j\| \leq 2\varepsilon$ and $|\Psi(T) - \langle T y_j\xi_2, y_j\xi_2 \rangle | \leq 2\varepsilon$ for all $T \in \lambda(F)\rho(F)$. Now take $\Phi \in \B(\rL^2(M))^*$ any accumulation point of the net of states $\langle \bdot y_j \xi_2, y_j \xi_2 \rangle, \: j \in J$. Then by construction, $\Phi \in \mathcal{E}_1(\Delta^{1/2})=\mathcal{E}_0(\log \Delta)$ and $\Phi|_{C^*_{\lambda \cdot \rho}(M)}=\Psi|_{C^*_{\lambda \cdot \rho}(M)}$.

Suppose now that $\omega \leq 0$. Fix $j=(F,\varepsilon) \in J$. Then, by Proposition \ref{prop log}, we can find $\lambda \leq 1$ and $x \in M$ such that $\|\xi_1 x-\lambda x \xi_2 \| \leq \varepsilon$ and $|\Psi(T) - \langle T \xi_1 x, \xi_1 x \rangle | \leq \varepsilon$ for all $T \in \lambda(F)\rho(F)$. Since $M \cong M \ovt R_\infty$, we can find a sequence of coisometries $v_n \in M$ such that $\lim_n \| v_n \xi_2-\lambda \xi_2 v_n\|=0$ and $\lim_n v_nav_n^*=a$ strongly for all $a \in F$. Then, we have $\lim_n \langle T  \xi_1 x v_n,    \xi_1x v_n \rangle = \langle T \xi_1x, \xi_1 x \rangle$ for all $T \in \lambda(F)\rho(F)$ and
$$\lim_n \| xv_n \xi_2-\xi_1 x v_n \|=\lim_n \| \lambda  x \xi_2 v_n -   \xi_1 xv_n \| =\| \lambda x\xi_2 - \xi_1x\| \leq \varepsilon.$$
Therefore, if we take $y_j=xv_n$ for $n$ large enough, we will have $\|y_j\xi_2-\xi_1y_j\| \leq 2\varepsilon$ and $|\Psi(T) - \langle T \xi_1y_j, \xi_1 y_j \rangle | \leq 2\varepsilon$ for all $T \in \lambda(F)\rho(F)$. Now take $\Phi \in \B(\rL^2(M))^*$ any accumulation point of the net of states $\langle \bdot \xi_1y_j, \xi_1 y_j \rangle, \: j \in J$. Then by construction, $\Phi \in \mathcal{E}_1(\Delta^{-{1/2}})=\mathcal{E}_0(\log \Delta)$ and $\Phi|_{C^*_{\lambda \cdot \rho}(M)}=\Psi|_{C^*_{\lambda \cdot \rho}(M)}$.

\end{proof}
\begin{rem} \label{type lambda}
In Lemma \ref{lem absorb}, if we know that $\mathrm{spec}(\Delta) \subset \lambda^\Z$, then the same conclusion holds if we only assume that $M \cong M \ovt R_\lambda$.
\end{rem}

\section{Full factors}

The following lemma is certainly well-known but we provide a proof for the reader's convenience.

\begin{lem} \label{lem compact}
Let $H$ be a Hilbert space and let $A \subset \B(H)$ be a $C^*$-algebra which is irreducible, i.e.\ $A'=\C$. Then, either $\K(H) \subset A$ or $\K(H) \cap A =\{0\}$. Moreover, in the latter case, every state on $A$ can be extended to a state on $\B(H)$ which vanishes on $\K(H)$.
\end{lem}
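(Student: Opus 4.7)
The strategy is to prove the dichotomy first (via the standard compact-ideal trick) and then deduce the state-extension from Hahn-Banach. Set $I := A \cap \K(H)$, which is a norm-closed two-sided ideal of $A$. The content of the first assertion is that $I \neq 0$ forces $I = \K(H)$.

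Assume $I \neq 0$ and pick $0 \neq x \in I$; then $x^*x \in I$ is a nonzero positive compact operator, and continuous functional calculus produces a nonzero finite-rank projection $p \in I$ (take a spectral projection for a positive eigenvalue of $x^*x$). The key claim is that $pIp$ acts irreducibly on the finite-dimensional space $pH$: if $K \subset pH$ is a nonzero closed $pIp$-invariant subspace, then $L := \overline{IK}$ is $A$-invariant (since $aI \subset I$) and nonzero (because the $A$-invariant subspace $\{\xi \in H : I\xi = 0\}$ is zero, using $I \neq 0$ and irreducibility of $A$), so $L = H$ by irreducibility. Projecting through $p$ gives $\overline{pIK} = pH$, while $K \subset pH$ implies $pIK = pIpK \subset K$, so $K = pH$. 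The irreducible C*-subalgebra $pIp$ of the finite-dimensional $\B(pH)$ must therefore equal $\B(pH)$ by the double commutant theorem, and in particular the rank-one projection $e := |\eta_0\rangle\langle\eta_0|$ lies in $I$ for any fixed unit vector $\eta_0 \in pH$.

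To promote this to all of $\K(H)$, I use the density of $A\eta_0$ in $H$ (immediate from $A \neq 0$ and irreducibility, since $\overline{A\eta_0}$ is a closed $A$-invariant subspace): for each $\xi \in H$, pick $a_n \in A$ with $a_n\eta_0 \to \xi$, so $a_n e \to |\xi\rangle\langle\eta_0|$ in operator norm (as $e$ has rank one), placing $|\xi\rangle\langle\eta_0|$ in $I$. Taking adjoints and composing, $|\xi\rangle\langle\zeta| = |\xi\rangle\langle\eta_0| \cdot |\eta_0\rangle\langle\zeta| \in I$ for all $\xi, \zeta \in H$; norm closure of $I$ then yields $\K(H) \subset I$, so $I = \K(H)$ and $\K(H) \subset A$.

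For the extension statement (assuming $A \cap \K(H) = 0$), I first observe that $B := A + \K(H)$ is a C*-subalgebra of $\B(H)$: the rule $[a + k] \mapsto a$ defines a *-isomorphism $B / \K(H) \to A$ (well-defined precisely because $A \cap \K(H) = 0$), and this forces $B$ to be norm-closed. Any state $\varphi$ on $A$ pulls back to a state $\varphi \circ \pi$ on $B$ vanishing on $\K(H)$, and a Hahn-Banach norm-preserving extension (combined with the standard criterion that a norm-one functional on a unital C*-algebra attaining value $1$ at the identity is automatically a state) extends it to a state on $\B(H)$ still vanishing on $\K(H)$. The one genuinely nontrivial step is the dichotomy: once a rank-one projection is produced in $I$ via $pIp = \B(pH)$, everything else reduces to ideal-theoretic bookkeeping and a textbook Hahn-Banach extension.
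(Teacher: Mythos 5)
Your proof is correct. The state-extension half is essentially the paper's argument: both you and the paper exploit that the Calkin quotient $\pi : \B(H) \to \B(H)/\K(H)$ restricts to an injective, hence isometric, $*$-homomorphism on $A$, and then invoke Hahn--Banach; the paper extends the pushed-forward state from $\pi(A)$ to the Calkin algebra and pulls back by $\pi$, while you form $B = A + \K(H)$, observe it is a $C^*$-subalgebra with $B/\K(H) \cong A$, and extend inside $\B(H)$ directly. For the dichotomy, you and the paper agree on the first move (functional calculus on a nonzero positive compact in $A$ yields a nonzero finite-rank projection) but then diverge technically. You show the corner $pIp$ acts irreducibly on $pH$ via an invariant-subspace argument, conclude $pIp = \B(pH)$ from the double commutant theorem, extract a rank-one projection $e \in I$, and use cyclicity of $\eta_0$ to build all rank-one operators (hence $\K(H)$) inside $I$. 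The paper skips producing a rank-one projection \emph{inside} $A$: it picks any rank-one $p \leq P$ and any rank-one $q$, takes a partial isometry $v$ with $v^*v = p$ and $vv^* = q$, approximates $v$ $*$-strongly by a bounded net in $A$ (using $A'' = \B(H)$), and observes that multiplication by the finite-rank $P \in A$ upgrades strong convergence to norm convergence, forcing $v$, hence $q = vv^*$, into $A$. Both proofs rest on the same strong-to-norm improvement afforded by a finite-rank projection in $A$; yours replaces the von Neumann/Kaplansky density step by the algebraic identity $pIp = \B(pH)$, and each route is standard and of comparable length. One small point worth flagging: your criterion that $\|f\| = f(1) = 1$ forces $f$ to be a state presupposes $1_H \in A$ (so that $B$ is unital); this holds in the paper's application $A = C^*_{\lambda \cdot \rho}(M)$ but is not listed as a hypothesis in the lemma.
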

\begin{proof}
Suppose that $\K(H) \cap A \neq \{0\}$. Take $T \in \K(H) \cap A$ a non-zero self-adjoint operator. Since $T$ is compact, by taking a spectral projection of $T$, we obtain a non-zero finite rank projection $P \in \K(H) \cap A$. Take $p \leq P$ a rank one projection. Let $q \in \K(H)$ be any rank one projection. We can find a partial isometry $v \in \K(H)$ such that $v^*v=p$ and $vv^*=q$. Since $A''=\B(H)$, we can find a bounded net $(a_i)_{i \in I}$ in $A$ such that $a_i \to v$ in the $*$-strong topology. Then, since $P$ has finite rank, the net $(a_iP)_{i \in I}$ converges in norm to $vP=q$. Hence $q \in A$. This holds for every rank one projection $q$. Thus $\K(H) \subset A$. 

For the second part, suppose that $\K(H) \cap A =\{0\}$. Let $\pi : \B(H) \rightarrow \B(H)/\K(H)$ be the quotient map. Then the restriction of $\pi$ to $A$ is injective. Thus for any state $\varphi \in A^*$, we can find a state $\phi \in \pi(A)^*$ such that $\phi \circ \pi|_A = \varphi$. Now, by the Hahn-Banach theorem, we can find a state $\Phi$ on $\B(H)/\K(H)$ such that $\Phi |_{\pi(A)}=\phi$. Then $\Phi \circ \pi$ is a state on $\B(H)$ which extends $\varphi$ and vanishes on $\K(H)$.
\end{proof}

\begin{proof}[Proof of Theorem \ref{main full compact}]
The result is trivial if $M$ is of type $\mathrm{I}$ and is already knwon if $M$ is of type $\II$ by \cite[Theorem 2.1]{Co75b}. So we can assume that $M$ is a type $\III$ factor. By \cite[Theorem 3.2]{HMV16}, there exists a faithful normal state $\varphi$ on $M$, a finite set $F \subset M$ with $a \varphi^{1/2}=\varphi^{1/2}a^*$ for all $a \in F$, and a constant $\kappa > 0$ such that
$$\forall x \in M, \quad \|x-\varphi(x)\|_\varphi \leq \kappa \left( \sum_{a \in F} \|xa-ax\|_\varphi + \inf_{\lambda \in \R^*_+} \| x\varphi^{1/2}-\lambda \varphi^{1/2}x\| \right) .$$
Let $\sigma_t=\Ad(\Delta_\varphi^{\ri t})$. Let $\omega_\varphi$ be the state on $C^*_{\lambda \cdot \rho}(M)$ defined by $\omega_\varphi(T)=\langle T \varphi^{1/2}, \varphi^{1/2} \rangle$ for all $T \in C^*_{\lambda \cdot \rho}(M)$. Observe that $\omega_\varphi$ is $\sigma$-invariant. Let $e_\varphi$ be the rank one projection on $\varphi^{1/2}$. Suppose that $\K(\rL^2(M))$ is not contained in $C^*_{\lambda \cdot \rho}(M)$. Then, by Lemma \ref{lem compact}, the set $K$ of all states $\Psi$ on $\B(\rL^2(M))$ extending $\omega_\varphi$ and which satisfy $\Psi(e_\varphi)=0$ is not empty. Thanks to Lemma \ref{lem binormal invariant}, we know that $K$ is strongly $\sigma$-invariant. Hence, by Proposition \ref{prop strong fixed point}, we can find a state $\Psi \in K$ which is strongly $\sigma$-invariant. Now, by Theorem \ref{thm state strong invariant}, $\Psi$ is the barycenter of a probability measure $\mu$ on $\mathcal{E}(\log \Delta_\varphi)$. Pick $\psi$ in the support of $\mu$. Since $\Psi(e_\varphi)=0$ and $\Psi(|\lambda(a)-\rho(a^*)|^2)=\omega_\varphi(|\lambda(a)-\rho(a^*)|^2)=0$ for all $a \in F$, we must also have $\psi(e_\varphi)=0$ and $\psi(|\lambda(a)-\rho(a^*)|^2)=0$ for all $a \in F$. Since $\psi \in \mathcal{E}(\log\Delta_\varphi)$ and since the graph of $\Delta_\varphi^{1/2}$ is the closure of $\{ (x\varphi^{1/2},\varphi^{1/2}x) \mid x \in M\}$, then by Proposition \ref{prop log}, we can find a net $(x_i)_{i \in I}$ in $M$ with $\|x_i\|_\varphi=1$ for all $i \in I$ such that $\psi = \lim_i \langle \bdot x_i \varphi^{1/2},x_i \varphi^{1/2} \rangle$ in the weak$^*$ topology and $\lim_i \inf_{\lambda \in \R^*_+} \| x_i \varphi^{1/2}-\lambda^{-1} \varphi^{1/2}x_i \|=0$. Since $\psi(e_\varphi)=0$ we have $\lim_i \varphi(x_i)=0$ and since $\psi(|\lambda(a)-\rho(a^*)|^2)=0$, we have $\lim_i \| ax_i-x_ia \|_\varphi^2=0$ for all $a \in F$. This is a contradiction. Thus, we must have $\K(\rL^2(M)) \subset C^*_{\lambda \cdot \rho}(M)$.
\end{proof}

\begin{proof}[Proof of Corollary \ref{cor central net}]
Let $(x_i)_{i \in I}$ be a central bounded net in $M$. Then $\lambda(x_i)T-T\lambda(x_i) \to 0$ strongly for all $T \in C^*_{\lambda \cdot \rho}(M)$. Hence, by Theorem \ref{main full compact}, we have $\lambda(x_i)p-p\lambda(x_i) \to 0$ strongly where $p$ is the rank one projection on $\varphi^{1/2}$ for some faithful normal state $\varphi$. This means that $\lim_i \|x_i - \varphi(x_i)\|_\varphi=0$.
\end{proof}

\begin{proof}[Proof of Corollary \ref{cor topology}]
Let $(\theta_i)_{i \in I}$ be a net in $\Aut(M)$ such that $\theta_i(x) \to x$ strongly for all $x \in M$. Then $U_{\theta_i}T U_{\theta_i}^* \to T$ strongly for all $T \in C^*_{\lambda \cdot \rho}(M)$ where $U_{\theta_i}$ is the unitary implementation of $\theta_i$ on $\rL^2(M)$. Hence, by Theorem \ref{main full compact}, we have $U_{\theta_i}e_\varphi U_{\theta_i}^* \to e_\varphi$ strongly where $e_\varphi$ is the rank one projection on $\varphi^{1/2}$ for any $\varphi \in M_*$. This means that $\lim_i \| \theta_i(\varphi)^{1/2} -\varphi^{1/2} \|=\| U_{\theta_i} \varphi^{1/2} -\varphi^{1/2} \|=0$ for all $\varphi \in M_*^+$. By Araki-Powers-St\o rmer's inequality, we conclude that $\lim_i \|\theta_i(\varphi)-\varphi \|=0$ for all $\varphi \in M_*^+$, hence for all $\varphi \in M_*$.
\end{proof}

\section{Bicentralizer flow}

\begin{proof}[Proof of Theorem \ref{main bicentralizer}]
First we prove that if $M$ is a type $\III_1$ factor such that $M \cong M \ovt R_\infty$, then $M$ has trivial bicentralizer. We use Haagerup's criterion for triviality of the bicentralizer stated in \cite[Theorem 7.2]{AHHM18}. Let $\varphi$ be a faithful normal state on $M$. By minimality of the spatial tensor product, there exists a state $\varphi \otimes \varphi$ on $C^*_{\lambda \cdot \rho}(M)$ such that $(\varphi \otimes \varphi)(\lambda(a)\rho(b))=\varphi(a)\varphi(b)$ for all $a,b \in M$. By Lemma \ref{lem binormal invariant}, we can find a strongly $\sigma$-invariant state $\Psi$ on $\B(\rL^2(M))$ which extends $\varphi \otimes \varphi$. Then by Theorem \ref{thm state strong invariant} and Lemma \ref{lem absorb}, we can assume that $\Psi$ is the barycenter of some probability measure $\mu$ on $\mathcal{E}_0(\log\Delta_\varphi)$. Let $x \in M$ such that $x\varphi^{1/2}=\varphi^{1/2}x^*$, $\|x\|_\varphi=1$ and $\varphi(x)=0$. Since $\Psi(|\lambda(x)-\rho(x^*)|^2)=(\varphi \otimes \varphi)(|\lambda(x)-\rho(x^*)|^2)= 2\|x\|_\varphi^2=2$ and $\Psi(|\rho(x^*)|^2)=\|x\|_\varphi^2=1$, we can find $\psi$ in the support of $\mu$ such that $\psi(|\lambda(x)-\rho(x^*)|^2) \geq \frac{1}{2}  + \psi(|\rho(x^*)|^2)$. Since $\psi \in \mathcal{E}_0(\log\Delta_\varphi)$, we can find a net $(a_i)_{i \in I}$ in $M$ such that $\|a_i\|_\varphi=1$ for all $i \in I$, $\lim_i \|a_i \varphi^{1/2}-\varphi^{1/2} a_i\|=0$ and $\psi = \lim_i \langle \bdot a_i \varphi^{1/2},a_i \varphi^{1/2} \rangle$ in the weak$^*$ topology. Then we have 
$$\lim_i \| x a_i - a_i x \|_{\varphi}^2=\psi(|\lambda(x)-\rho(x^*)|^2) \geq \frac{1}{2}  + \psi(|\rho(x^*)|^2)=\frac{1}{2} + \lim_i \|a_ix\|_\varphi^2.$$ Therefore, for any $\delta > 0$, if we let $a=a_i$ with $i$ large enough, we will have
$$ \|a\|_\varphi + \|ax\|_\varphi < 3 \| xa-ax\|_\varphi$$
$$ \|a \varphi^{1/2}-\varphi^{1/2}a\| < \delta \|xa-ax\|_\varphi$$
which is exactly the criterion of \cite[Theorem 7.2]{AHHM18}. We conclude that $M$ has trivial bicentralizer.

Now, back to the general case, let $M$ be any type $\III_1$ factor with a faithful normal state $\varphi$. Let us show that the bicentralizer flow $\beta^\varphi : \R^*_+ \curvearrowright \B(M,\varphi)$ is ergodic. Suppose that the fixed point algebra $\B(M,\varphi)^{\beta^\varphi}$ is non-trivial. Then $\B(M,\varphi)^{\beta^\varphi}$ is a self-bicentralizing type $\III_1$ factor with trivial bicentralizer flow, hence $\B(M,\varphi)^{\beta^\varphi} \cong \B(M,\varphi)^{\beta^\varphi} \ovt R_\infty$ by \cite[Theorem B]{AHHM18}. But this is not possible by the first part of the proof. Therefore $\B(M,\varphi)^{\beta^\varphi}$ must be trivial and $\beta^\varphi$ is ergodic. Let us show that $\beta^\varphi_\lambda$ is ergodic for every $\lambda \in \R^*_+ \setminus \{1\}$. If not, then the fixed point algebra $\B(M,\varphi)^{\beta^\varphi_\lambda}$ is a self-bicentralizing type $\III_1$ factor with periodic bicentralizer flow. But this is not possible because a periodic flow on a non-amenable factor cannot be ergodic. Therefore $\B(M,\varphi)^{\beta^\varphi_\lambda}$ must be trivial and $\beta^\varphi_\lambda$ is ergodic. 

Finally, item $(\rm i)$ follows from \cite[Theorem C]{AHHM18} and item $(\rm ii)$ follows from \cite[Theorem B.$(\rm iii)$]{AHHM18}.
\end{proof}

\begin{prop} \label{prop bic approx}
Let $M$ be a type $\III_1$ factor. Then for any $\lambda \in ]0,1[$, the automorphism $\beta_\lambda \otimes \id \curvearrowright \B(M) \ovt R_\lambda$ is approximately inner.
\end{prop}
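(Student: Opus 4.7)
The plan is to leverage the strong weak-inner property of the bicentralizer flow from \cite[Proposition 4.1.(iv)]{AHHM18} together with the existence of genuine eigenoperators for the modular flow of $R_\lambda$. Heuristically, the obstruction to $\beta_\lambda$ being approximately inner on $\B(M)$ alone is that its implementing elements behave like asymptotic $\lambda$-eigenvectors for a modular flow on a type $\III_1$ factor, where no true eigenvectors exist. Tensoring with $R_\lambda$ introduces a reservoir of genuine $\lambda^{-1}$-eigenoperators that can absorb this obstruction.

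First, I would fix a faithful normal state $\varphi$ on $\B(M)$ and a faithful normal state $\psi$ on $R_\lambda$ whose modular flow $\sigma^\psi$ is periodic of period $T = 2\pi/|\log \lambda|$, and set $\Phi = \varphi \otimes \psi$. The precise content of \cite[Proposition 4.1.(iv)]{AHHM18} that I would extract is that $\beta_\lambda$ is implemented asymptotically on $\B(M)$ by a bounded net $(u_i)_{i \in I}$ in $M$ with the additional feature that the vectors $u_i \varphi^{1/2} \in \rL^2(\B(M))$ are asymptotic eigenvectors of $\Delta_\varphi$ with eigenvalue $\lambda$ in the sense of Proposition \ref{prop log}. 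Next, using the Powers description of $R_\lambda$ as an infinite tensor product of $2 \times 2$ matrix blocks with Powers state, I pick a partial isometry $v \in R_\lambda$ satisfying the exact eigenvalue relation $\sigma_t^\psi(v) = \lambda^{-it} v$, which after an ampliation may be upgraded to a unitary.

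The core step is to form $w_i = u_i \otimes v \in \B(M) \ovt R_\lambda$ and to observe that the asymptotic $\lambda$-scaling from the $u_i$ side and the exact $\lambda^{-1}$-scaling from $v$ cancel: the $w_i$ are asymptotically in the centralizer of $\Phi$, in the sense that $\| w_i \Phi^{1/2} - \Phi^{1/2} w_i \| \to 0$. A standard perturbation argument inside the $\Phi$-centralizer then promotes $w_i$ to a genuine unitary $\tilde w_i$ whose adjoint action differs from that of $w_i$ by an error tending to $0$ in the $u$-topology, and a direct computation gives $\Ad(\tilde w_i) \to \beta_\lambda \otimes \id$ on $\B(M) \ovt R_\lambda$. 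The main technical hurdle is to extract the exact asymptotic eigenvector statement on the $u_i$ side from \cite[Proposition 4.1.(iv)]{AHHM18} in a form compatible with Proposition \ref{prop log}; once this is in place, the remainder of the argument is a routine Powers--St\o rmer type estimate inside the centralizer of $\Phi$, supplemented by the fact that the centralizer of $\Phi$ already contains many unitaries thanks to the $R_\lambda$ tensor factor.
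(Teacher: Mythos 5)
Your overall strategy---cancelling the asymptotic $\lambda$-eigenvalue of the implementers of $\beta_\lambda$ against a genuine $\lambda^{-1}$-eigenoperator in $R_\lambda$, so that the tensor product lands in the centralizer of $\varphi \otimes \psi$---is exactly the idea of the paper's proof, and your first two steps are sound. The gap occurs at the very end: you take a \emph{fixed} partial isometry $v \in R_\lambda$ with $\sigma_t^\psi(v) = \lambda^{-\ri t} v$ and then assert that $\Ad(\tilde w_i) \to \beta_\lambda \otimes \id$. This is false as stated. With $w_i = u_i \otimes v$ one has $\Ad(w_i)(1 \otimes y) = 1 \otimes vyv^*$, which does not converge to $1 \otimes y$: a fixed eigenoperator of $R_\lambda$ implements the non-trivial (periodic) automorphism $\Ad(v)$ of $R_\lambda$, not the identity. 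So the intertwining you need on the second tensor factor is simply not there. The paper avoids this by taking the eigenoperator $w$ \emph{inside the central sequence algebra} $R_\lambda' \cap R_\lambda^\omega$ (which still contains $\lambda^{-1}$-eigenoperators for $\psi^\omega$, e.g.\ the off-diagonal matrix unit in the $n$-th Powers factor as $n \to \omega$). Since $w$ commutes with $R_\lambda$, the product $v \otimes w$ really does intertwine $\beta_\lambda \otimes \id$, and one concludes directly via the criterion from the proof of \cite[Theorem 1]{Co85}.

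There are two ways to repair your argument. The cleanest is to replace your fixed $v$ by a sequence $(v_n)$ of eigen-partial-isometries that asymptotically commutes with $R_\lambda$ (equivalently, work with $w \in R_\lambda' \cap R_\lambda^\omega$ as the paper does). Alternatively, keep $v$ fixed: then your $w_i$ asymptotically intertwine $\beta_\lambda \otimes \Ad(v)$, and you may note that $\id \otimes \Ad(v)$ is inner, so approximate innerness of $\beta_\lambda \otimes \Ad(v)$ is equivalent to that of $\beta_\lambda \otimes \id$; this salvages the conclusion but requires an extra line you did not write. Finally, the ``promote to unitaries inside the centralizer'' step is both delicate (the centralizer of $\varphi$ on $\B(M)$ may be trivial) and unnecessary: the Connes criterion in \cite{Co85} only needs a net of non-zero elements in the asymptotic centralizer that asymptotically intertwine, which your $w_i$ already are after the fix above.
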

\begin{proof}
Let $\varphi$ be a faithful normal state on $M$ and $\psi$ a faithful normal state on $R_\lambda$. Take any free ultrafilter $\omega \in \beta \N \setminus \N$. Let $v \in \B(M,\varphi)^\omega$ be a non-zero partial isometry such that $v\varphi^\omega=\lambda \varphi^{\omega}v$ and let $w \in R_\lambda' \cap R_\lambda^\omega$ be a non-zero partial isometry such that $w\psi^{\omega}=\lambda^{-1}\psi^{\omega}w$. Define a non-zero partial isometry $u=v \otimes w \in (\B(M,\varphi) \ovt R_\lambda)^\omega$. Then for any $x \in \B(M,\varphi) \ovt R_\lambda$ we have $(\beta^{\varphi}_\lambda \otimes \id)(x)u=ux$. Moreover, we have $u(\varphi \otimes \psi)^\omega=(\varphi \otimes \psi)^\omega u$.This implies that $\beta^{\varphi}_\lambda \otimes \id$ is approximately inner (see the proof of \cite[Theorem 1]{Co85}).
\end{proof}

\section{Approximately inner automorphisms}

\begin{proof}[Proof of Theorem \ref{main weakly inner}]
Let $\theta$ be a weakly inner automorphism of $M$. In order to show that $\theta$ is approximately inner, we will use the criterion of \cite[Theorem III.1]{Co85}. Take $\xi_1, \dots, \xi_n \in \rL^2(M)$. We have to show that for every $\varepsilon > 0$, there exists a non-zero $x \in M$ such that  $\sum_k \| x \xi_k-\theta(\xi_k) x \|^2 \leq \varepsilon \sum_k \| x \xi_k\|^2$. Let $\varphi_1$ be a faithful normal state on $M$ such that every $\xi_k$ is $\varphi_1$-bounded. Then we have $\xi_k=a_k \varphi_1^{1/2}=\varphi_1^{1/2}b_k$ for some $a_k,b_k \in M$. Let $\varphi_2=\theta(\varphi_1)=\varphi_1 \circ \theta^{-1}$. Since $\theta$ is weakly inner, there exists an automorphism $\alpha$ of $C^*_{\lambda \cdot \rho}(M)$ such that $\alpha(\lambda(a)\rho(b))=\lambda(\theta(a))\rho(b)$ for all $a,b \in M$.  Define a state $\omega_\theta$ on $C^*_{\lambda \cdot \rho}(M)$ by $\omega_\theta(T)=\langle \alpha^{-1}(T) \varphi_1^{1/2}, \varphi_1^{1/2} \rangle$. Let $\Delta=\Delta_{\varphi_2, \varphi_1}$ and $\sigma_t=\Ad(\Delta^{\ri t})$ for all $t \in \R$. Observe that $\omega_\theta$ is $\sigma$-invariant. By Lemma \ref{lem binormal invariant}, we can find a strongly $\sigma$-invariant state $\Psi$ on $\B(\rL^2(M))$ which extends $\omega_{\theta}$. By Theorem \ref{thm state strong invariant}, we have $\Psi \in \conv\mathcal{E}(\log \Delta)$ and by Lemma \ref{lem absorb}, we can in fact assume that $\Psi \in \conv\mathcal{E}_0(\log \Delta)$. Then $\Psi$ is the barycenter of some probability measure $\mu$ on $\mathcal{E}_0(\log \Delta)$. Hence we can find $\psi$ in the support of $\mu$ such that
$$ \sum_k \psi(|\rho(b_k)|^2) \geq \frac{1}{2} \sum_k \Psi(|\rho(b_k)|^2)=\frac{1}{2} \sum_k \|\xi_k\|^2. $$
 Since $\psi \in \mathcal{E}_0(\log\Delta)$, we can find a net $(x_i)_{i \in I}$ in $M$ with $\|x_i\|_{\varphi_1}=1$ for all $i \in I$ such that $\lim_i \| x_i \varphi_1^{1/2}-\varphi_2^{1/2}x_i \|=0$ and $\psi=\lim_i \langle \bdot x_i \varphi_1, x_i\varphi_1 \rangle =0$ in the weak$^*$ topology. 
 Then we get
 $$ \lim_i \sum_k \|x_i \xi_k\|^2=\sum_k \psi(|\rho(b_k)|^2) \geq \frac{1}{2} \sum_k \|\xi_k\|^2.$$
  Since for all $k$, we have 
$$\Psi(|\lambda(\theta(a_k))-\rho(b_k)|^2)=\|a_k\varphi_1^{1/2}-\varphi^{1/2}_1b_k\|^2=0,$$
then we also have $\psi(|\lambda(\theta(a_k))-\rho(b_k)|^2)=0$. Thus for all $k$, we get
$$\lim_i\| \theta(\xi_k)x_i-x_i \xi_k\|^2=\lim_i \| \theta(a_k) \varphi_2^{1/2}x_i-x_i \varphi_1^{1/2}b_k\|^2=\psi(|\lambda(\theta(a_k))-\rho(b_k)|^2)=0.$$
which means that if $i$ is large enough, we will have
$$ \sum_k \|x_i \xi_k-\theta(\xi_k)x_i \|^2 \leq \varepsilon \sum_k \|x_i \xi_k\|^2.$$
By \cite[Theorem III.1]{Co85}, we conclude that $\theta$ is approximately inner.
\end{proof}

Let $M$ be a type $\III_\lambda$ factor for $\lambda \in ]0,1[$ with $\lambda$-trace $\varphi$. Then for any $\theta \in \Aut(M)$ such that $\mathrm{mod}(\theta)$ is trivial, we can find a unitary $u \in M$ such that $\theta \circ \Ad(u)$ leaves $\varphi$ invariant. Hence the same proof of Theorem \ref{main weakly inner} combined with Remark \ref{type lambda} gives the following.

\begin{thm} \label{thm weak inner lambda}
Let $M$ be a factor of type $\III_\lambda, \: \lambda \in ]0,1[$ such that $M \cong M \ovt R_\lambda$. Then a weakly inner automorphism $\theta \in \Aut(M)$ is approximately inner if and only if $\mathrm{mod}(\theta)$ is trivial.
\end{thm}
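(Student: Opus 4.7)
The plan is to establish both directions. The forward direction is standard: if $\theta$ is approximately inner then it induces the identity on the flow of weights, so $\mathrm{mod}(\theta)$ is trivial, as recalled in the introduction.

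For the converse, the strategy is to first reduce to the case where $\theta$ preserves a $\lambda$-trace, and then run the proof of Theorem \ref{main weakly inner} with the variant of Lemma \ref{lem absorb} provided by Remark \ref{type lambda}. Since $M$ is of type $\III_\lambda$ with $\lambda \in ]0,1[$, it carries a faithful normal $\lambda$-trace $\varphi$ whose modular operator $\Delta_\varphi$ satisfies $\spec(\Delta_\varphi) \subset \lambda^{\Z}$. Triviality of $\mathrm{mod}(\theta)$ says that $\theta(\varphi)$ is again a $\lambda$-trace on $M$, and since $\lambda$-traces are unique up to an inner automorphism and a positive scalar, we may compose $\theta$ with an appropriate $\Ad(u)$---which does not affect approximate innerness---to arrange $\varphi \circ \theta = \varphi$. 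With this normalization the relative modular operator $\Delta_{\theta(\varphi),\varphi}$ reduces to $\Delta_\varphi$, so the crucial spectral restriction $\spec(\Delta) \subset \lambda^{\Z}$ holds.

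I would then follow the proof of Theorem \ref{main weakly inner} step by step, taking $\varphi_1 = \varphi_2 = \varphi$: define $\omega_\theta$ on $C^*_{\lambda \cdot \rho}(M)$ from the $C^*$-algebra automorphism $\alpha$ implementing weak innerness; apply Lemma \ref{lem binormal invariant} to lift it to a strongly $\sigma$-invariant state $\Psi$ on $\B(\rL^2(M))$ for $\sigma_t = \Ad(\Delta_\varphi^{\ri t})$; express $\Psi$ as a barycenter over $\mathcal{E}(\log \Delta_\varphi)$ via Theorem \ref{thm state strong invariant}; invoke the $\lambda^{\Z}$-spectrum version of Lemma \ref{lem absorb} recorded in Remark \ref{type lambda}---which now requires only $M \cong M \ovt R_\lambda$---to replace $\Psi$ by one supported in $\mathcal{E}_0(\log \Delta_\varphi)$; pick $\psi$ in the support of the representing measure with $\sum_k \psi(|\rho(b_k)|^2) \geq \tfrac{1}{2} \sum_k \|\xi_k\|^2$; and finally use Proposition \ref{prop log} to produce a net $(x_i)$ in $M$ verifying Connes' criterion \cite[Theorem III.1]{Co85} for approximate innerness of $\theta$.

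The sole substantive departure from Theorem \ref{main weakly inner} is the preliminary reduction to a $\varphi$-preserving $\theta$: this is exactly the condition that unlocks Remark \ref{type lambda} and allows one to weaken the absorption hypothesis from $M \cong M \ovt R_\infty$ down to $M \cong M \ovt R_\lambda$. Everything else in the argument is essentially word-for-word the type $\III_1$ proof.
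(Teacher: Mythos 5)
Your argument is exactly the paper's: reduce to a $\lambda$-trace-preserving $\theta$ by composing with a suitable $\Ad(u)$, note that the relative modular operator then becomes $\Delta_\varphi$ with spectrum in $\lambda^{\Z}$, and rerun the proof of Theorem~\ref{main weakly inner} using the weakened absorption hypothesis $M \cong M \ovt R_\lambda$ supplied by Remark~\ref{type lambda}. This matches the paper's intended proof both in structure and in all details.
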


We end this section with the following consequence of Theorem \ref{main full compact}.

\begin{prop} \label{prop weak inner full}
Let $M$ be a full factor. Then every weakly inner automorphism of $M$ is inner.
\end{prop}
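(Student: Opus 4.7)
The plan is to combine Theorem \ref{main full compact} with the classical fact that a $C^*$-algebra containing $\K(H)$ has a unique faithful irreducible representation on $H$ up to unitary equivalence. By hypothesis, there exists an automorphism $\alpha$ of $A := C^*_{\lambda \cdot \rho}(M)$ implementing $\theta$ in the sense that $\alpha(\lambda(a)\rho(b)) = \lambda(\theta(a))\rho(b)$. The goal is to upgrade $\alpha$ to an automorphism of $\B(\rL^2(M))$, i.e.\ to show $\alpha = \Ad(U)$ for some unitary $U$ on $\rL^2(M)$, and then to read off that $U$ lies in $\lambda(M)$.

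More concretely, the first step is to apply Theorem \ref{main full compact}: since $M$ is full, $\K(\rL^2(M)) \subset A$. Note that $A$ acts irreducibly on $\rL^2(M)$ because $A' \subset \lambda(M)' \cap \rho(M)' = \rho(M) \cap \lambda(M) = \mathcal{Z}(M) = \C$. Consequently the identity representation of $A$ on $\rL^2(M)$ is, up to unitary equivalence, the unique faithful irreducible representation of $A$ (any irreducible representation whose restriction to the simple ideal $\K(\rL^2(M))$ is non-zero must be unitarily equivalent to the identity representation, by the standard argument for representations of $C^*$-algebras containing the compacts). Since $\alpha$ is an automorphism, the representation $\id \circ \alpha^{-1}$ is also a faithful irreducible representation of $A$ on $\rL^2(M)$, so by uniqueness there exists a unitary $U \in \B(\rL^2(M))$ with $\alpha(T) = U T U^*$ for every $T \in A$.

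The second step is then a direct computation. From $U \rho(b) U^* = \alpha(\rho(b)) = \rho(b)$ for every $b \in M$, we obtain $U \in \rho(M)' = \lambda(M)$, so $U = \lambda(u)$ for some unitary $u \in \mathcal{U}(M)$. Then the identity $U \lambda(a) U^* = \alpha(\lambda(a)) = \lambda(\theta(a))$ translates, by injectivity of $\lambda$, into $u a u^* = \theta(a)$ for all $a \in M$. Hence $\theta = \Ad(u)$ is inner. The only non-routine ingredient is Theorem \ref{main full compact}, which is already proved; the rest is standard $C^*$-algebraic reasoning and should pose no real obstacle.
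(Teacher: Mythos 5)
Your proof is correct, and it takes a genuinely different route from the paper's. Both proofs start from Theorem \ref{main full compact} (to get $\K(\rL^2(M)) \subset C^*_{\lambda\cdot\rho}(M)$), but they then diverge. You invoke the abstract fact that an irreducible $C^*$-algebra $A \subset \B(H)$ containing the compacts has, up to unitary equivalence, a unique faithful irreducible representation; since composing the inclusion $A \subset \B(H)$ with $\alpha^{-1}$ gives another such representation, $\alpha$ is automatically spatially implemented, $\alpha = \Ad(U)$. The two constraints $\alpha|_{\rho(M)} = \id$ and $\alpha|_{\lambda(M)} = \lambda \circ \theta \circ \lambda^{-1}$ then force $U = \lambda(u) \in \rho(M)' = \lambda(M)$ and $\theta = \Ad(u)$. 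The paper instead works directly with a single rank-one projection: it fixes a faithful normal state $\varphi$, uses that $e_\varphi \in C^*_{\lambda\cdot\rho}(M)$, applies $\alpha$ to the modular identity $\lambda(a)e_\varphi = \rho(\sigma^\varphi_{\ri/2}(a))e_\varphi$ (for $\varphi$-analytic $a$), and deduces from the resulting relation $\theta(a)\eta = \eta\,\sigma^\varphi_{\ri/2}(a)$ (with $\eta$ spanning the range of $\alpha(e_\varphi)$) that $\eta = u\varphi^{1/2}$ for a unitary $u$ and hence $\theta = \Ad(u)$. Your argument is more conceptual and shorter, isolating the spatial implementability of $\alpha$ as the key step and avoiding modular-theory computations; the paper's argument is more elementary in that it only uses the single compact operator $e_\varphi$ and standard KMS manipulations rather than the representation theory of $C^*$-algebras containing the compacts. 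Both rely on exactly the same nontrivial input, namely Theorem \ref{main full compact}.
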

\begin{proof}
Let $\theta$ be a weakly inner automorphism of $M$. Then there exists an automorphism $\alpha$ of $C^*_{\lambda \cdot \rho}(M)$ such that $\alpha(\lambda(a)\rho(b))=\lambda(\theta(a))\rho(b)$ for all $a,b \in M$. Let $\varphi$ be a faithful normal state on $M$. Since $M$ is full, we know by Theorem \ref{main full compact} that $C^*_{\lambda \cdot \rho}(M)$ contains the rank one projection $e_\varphi$ on $\varphi^{1/2}$. For every $\varphi$-analytic $a \in M$ we have $\lambda(a)e_\varphi=\rho(\sigma^{\varphi}_{\ri/2}(a))e_\varphi$. Therefore, by applying $\alpha$, we obtain $\lambda(\theta(a))\alpha(e_\varphi)=\rho(\sigma^{\varphi}_{\ri/2}(a))\alpha(e_\varphi)$. Let $\eta$ be a unit vector in the image of the rank one projection $\alpha(e_\varphi)$. Then we have $\theta(a)\eta=\eta \sigma^{\varphi}_{\ri/2}(a)$ for all $\varphi$-analytic $a$. So by taking the adjoint, we get $\sigma_{-\ri/2}^\varphi(a^*)\eta^*=\eta^*\theta(a^*)$ for all $\varphi$-analytic $a$. Replace $a^*$ by $\sigma_{\ri/2}^\varphi(a)$. We get $a\eta^*=\eta^*\theta(\sigma_{\ri/2}(a))$ for all $\varphi$-analytic $a$. Therefore, if we let $\psi=\eta^*\eta$, we have
$$ a\psi=\eta^*\theta(\sigma_{i/2}(a))\eta=\eta^*\eta \sigma_{i}^\varphi(a)=\psi \sigma_{i}^\varphi(a)$$
 for all $\varphi$-analytic $a$. This forces $\psi=\varphi$ and $\eta=u\varphi^{1/2}$ for some unitary $u \in M$. We conclude easily that $\theta=\Ad(u)$.
\end{proof}

\bibliographystyle{plain}

\end{document}